\newtheorem{theorem}{Theorem}
\newtheorem{lemma}[theorem]{Lemma}
\newtheorem{proposition}[theorem]{Proposition}
\numberwithin{theorem}{section}
\numberwithin{equation}{section}
\theoremstyle{definition}
\newtheorem{definition}[theorem]{Definition}
\newtheorem{remark}[theorem]{Remark}
\newtheorem{example}[theorem]{Example}
\newcommand{\from}{\colon}
\newcommand{\R}{{\mathbb{R}}}
\newcommand{\Q}{{\mathbb{Q}}}
\newcommand{\BV}{{\mathrm{BV}}}
\newcommand{\eps}{\varepsilon}
\renewcommand{\epsilon}{\varepsilon}
\newcommand{\norm}[1]{\|#1\|}
\newcommand{\loc}{\mathrm{loc}}
\newcommand{\radon}{\mathscr{M}}
\newcommand{\abs}[1]{|#1|}
\newcommand{\borel}{\mathscr{B}}
\renewcommand{\leq}{\leqslant}
\renewcommand{\geq}{\geqslant}
\renewcommand{\phi}{\varphi}
\newcommand{\relspace}{\hphantom{{}={}}}
\newcommand{\nqquad}{\hspace{-2em}}	
\newcommand{\hf}{{\nicefrac12}}
\DeclareMathOperator*{\esssup}{ess\, sup}
\DeclareMathOperator*{\essinf}{ess\, inf}
\DeclareMathOperator{\sgn}{sgn}
\DeclareMathOperator{\supp}{supp}
\begin{document}

\markboth{Particle paths of hyperbolic conservation laws}{Particle paths of hyperbolic conservation laws}

%
%

\title[Particle paths of hyperbolic conservation laws]{The particle paths of \\ hyperbolic conservation laws}

\author[U. S. Fjordholm]{Ulrik S. Fjordholm}
\address{Department of Mathematics, University of Oslo, PO Box 1053 Blindern, 0316 Oslo, Norway}
\email{ulriksf@math.uio.no}

\author[O. H. Mæhlen]{Ola H. Mæhlen}
\email{olamaeh@math.uio.no}

\author[M. C. Ørke]{Magnus C. Ørke}
\email{magnusco@math.uio.no.}

\begin{abstract}
Nonlinear scalar conservation laws are traditionally viewed as transport equations. We take instead the viewpoint of these PDEs as continuity equations with an implicitly defined velocity field. We show that a weak solution is the entropy solution if and only if the ODE corresponding to its velocity field is well-posed. We also show that the flow of the ODE is $\hf$-H\"older regular. Finally, we give several examples showing that our results are sharp, and we provide explicit computations in the case of a Riemann problem.
\end{abstract}

\keywords{Hyperbolic conservation laws; continuity equations; transport equations.}
\subjclass[2020]{34A36, 35A02, 35L65}
\maketitle

\section{Introduction}

We consider the scalar conservation law
\begin{equation}\label{eq:cl}
\begin{cases}
\partial_t u + \partial_x f(u) = 0 \\
u(0)=u_0
\end{cases}
\end{equation}
for a given $f \in C^1(\R)$, some $u_0\in \BV_{\loc}\cap L^\infty(\R)$, and an unknown $u=u(x,t)\from\R\times\R_+\to\R$. (Here and elsewhere, we denote $\R_+\coloneqq [0,\infty)$.)

The traditional interpretation of the conservation law \eqref{eq:cl} is as the transport equation
\begin{equation}\label{eq:transport}
    \partial_t u + f'(u)\partial_x u = 0.
\end{equation}
From this formulation, the method of characteristics provides the solution
\begin{equation*}
    u(x, t) = u_0\bigl(\xi_t^{-1}(x)\bigr), \qquad \text{where }\
    \begin{cases}
        \dot{\xi}_t(x) = f'(u(\xi_t,t)) \\
        \xi_0(x) = x,
    \end{cases}
\end{equation*}
(here, we denote $\dot{\xi}=\frac{d\xi}{dt}$, and $\xi_t$ denotes $\xi$ evaluated at time $t$), at least
for smooth initial data $u_0$. However, it is well-known that the method of characteristics might break down at the onset of discontinuities in $u$ (see e.g.~Example \ref{ex:burgers_rarefaction}). A theory of generalized characteristics has been constructed, but at the cost of technical difficulty, see Refs.~\cite{dafermos_1977} and \cite{bianchini_bonicatto_marconi_2021}.

Instead of viewing the conservation law \eqref{eq:cl} as a transport equation \eqref{eq:transport}, we will investigate the connection between \eqref{eq:cl} and the \emph{continuity equation}
\begin{equation}\label{eq:conteq}
    \partial_t v + \partial_x(a v) = 0,
\end{equation}
where $a\from\R\times\R_+\to\R$ is a given vector field. For sufficiently smooth velocity $a$, it is well-known (see e.g.~\cite[Section 2]{ambrosio_crippa_2008}) that the solution of \eqref{eq:conteq} is provided via the ordinary differential equation (ODE)
\begin{equation}\label{eq:flowLinear}
    \begin{cases}
        \dot{x}_t = a(x_t,t) & \text{for}\ t > s,\\
        x_s = x.
    \end{cases}
\end{equation}
If $a$ is, say, Lipschitz continuous then \eqref{eq:flowLinear} is well-posed for all $(x, s) \in \R \times \R_+$. Denote the solution by $x_t=X_t(x,s)$, and for $s=0$, write $X_t(x)=X_t(x,0)$. (The map $X$ is the \emph{flow} of $a$). Then the continuity equation \eqref{eq:conteq} has a unique solution given by the pushforward of the initial data by the flow:
\begin{equation}\label{eq:pushforward}
    v(\cdot,t) = {(X_t)}_{\#} v_0, \quad \text{that is,} \quad \int_{\R} v(x,t)\psi(x)\,dx = \int_{\R} v_0(x) \psi(X_t(x))\,dx
\end{equation}
for all test functions $\psi\in C_c(\R)$. For irregular velocity fields the well-posedness of \eqref{eq:conteq} and \eqref{eq:flowLinear}---and the connection between these---is much more delicate, and remains a subject of active study.

The intuitive interpretation of \eqref{eq:conteq} is that $v$ describes the spatial distribution of \emph{mass}, and that this mass is transported by the velocity field $a$. Thinking of the mass at a single point $x$ at time $s$ as a \emph{particle}, the flow map $t \mapsto X_t(x,s)$ describes the path of that particle forward in time. For this reason, we will refer to these maps as \emph{particle paths}. Our picture is complicated somewhat by the fact that the mass distribution $v$ might attain negative values. We will think of negative values of $v$ as \emph{anti-particles}; as we will see, the correct behaviour at a collision between a particle and an anti-particle is that they merge and that their masses mutually cancel.

Setting $v\coloneqq u-c$ for an arbitrary constant $c\in\R$, it is readily seen that \eqref{eq:cl} is equivalent to the (nonlinear) continuity equation \eqref{eq:conteq} with the velocity field
\begin{equation}\label{eq:velocityfield}
    a_c(x,t) \coloneqq A(u(x,t),c),
    \qquad\text{where}\qquad
    A(u, c) \coloneqq
    \begin{cases}
        \frac{f(u)-f(c)}{u-c}& \text{if } u\neq c,\\
        f'(c) & \text{if } u=c.
    \end{cases}
\end{equation}
It is not at all clear, however, whether the ODE \eqref{eq:flowLinear} corresponding to $a=a_c$ is well-posed; whether the corresponding continuity equation \eqref{eq:conteq} is well-posed; or what this well-posedness implies for the solution $u$ of \eqref{eq:cl}. Since there is no bound on the compression of the flow or the divergence of $a_c$, neither the DiPerna--Lions theory of Lagrangian flows \cite{diperna_lions_1989} nor Ambrosio's theory of $\BV$ vector fields \cite{ambrosio_2004} applies. Our main goal is to show that the well-posedness of these two problems is tightly linked.

\subsection{Main results}
We summarize here our main findings. Their proofs are distributed throughout the remaining sections of the paper.

\begin{theorem}[Main Theorem]\label{thm:main}
Assume $f\in C^1(\R)$ and that $u_0\in \BV_{\loc}\cap L^\infty(\R)$. Let $u\in L^\infty(\R\times\R_+)$ be a weak solution of \eqref{eq:cl} with $u(t)\in \BV_{\loc}(\R)$ for a.e.~$t\geq 0$. Let $a_c$ be defined as in \eqref{eq:velocityfield}. Then the following are equivalent:
\begin{enumerate}
\item $u$ is the entropy solution of \eqref{eq:cl}.
\item The ODE
\begin{equation}\label{eq:flow}
    \begin{cases}
        \dot{x}_t = a_c(x_t,t) & \text{for}\ t > s,\\
        x_s = x,
    \end{cases}
\end{equation}
is well-posed in the Filippov sense for all $x\in\R$, $s\geq0$ and all $c\in\R$.
\end{enumerate}
Moreover, for any $c\in\R$, the entropy solution $u$ has the representation formula
\begin{equation}\label{eq:representation-formula}
    u(\cdot,t) = c + (X^c_t)_\#(u_0-c) \qquad \text{for } t\geq0
\end{equation}
where $X^c=X^c_t(x)$ is the flow of the ODE \eqref{eq:flow}.
\end{theorem}
See Section \ref{sec:preliminaries} for a review of the theory of entropy solutions of conservation laws, Filippov solutions of ODEs, and BV functions. We remark that Filippov solutions of ODEs are solutions in the usual sense wherever the right-hand side is continuous, but that Filippov solutions are also applicable to right-hand sides that are discontinuous or that are only defined almost everywhere.

The proof of sufficiency in the Main Theorem, given in Section \ref{sec:uniquenessOfFlow}, goes via a novel and detailed analysis of particle paths (solutions of the associated ODE \eqref{eq:flow}). As a byproduct of this analysis, we derive a sharp regularity estimate on the flow of the ODE (see Theorem \ref{thm:flowRegular} and Section \ref{sec:flowRegularity}). The proof of necessity, given in Section \ref{sec:selectionPrinciple}, first establishes a novel representation formula for the continuity equation by a commutator estimate (\`a~la~DiPerna--Lions~\cite{diperna_lions_1989}), and then utilizes this to estimate the entropy production of the weak solution.

\newmdenv[innerlinewidth=0.5pt, roundcorner=4pt,linecolor=myLineColor,backgroundcolor=myBackgroundColor,innerleftmargin=6pt,innerrightmargin=6pt,innertopmargin=6pt,innerbottommargin=6pt]{mybox}
\definecolor{myLineColor}{rgb}{0.122, 0.435, 0.698}
\definecolor{myBackgroundColor}{rgb}{0.898, 0.937, 1}
\begin{remark}\label{rem:main_thm_generalization}
The condition $f\in C^1(\R)$ can be generalized to $f$ merely being Lipschitz continuous---at the cost of replacing ``for all $c\in\R$'' with ``for a.e.~$c\in\R$''. Although we will not carry out this generalization here, it is instructive to note the general statement:

\noindent
For any $c\in\R$ where $f$ is differentiable, we have: \\
\begin{minipage}{0.43\linewidth}
\begin{mybox}
$u$ satisfies the Kruzkhov entropy condition for $\eta_c(u)\coloneqq|u-c|$
\end{mybox}
\end{minipage}
\begin{minipage}[c]{0.12\linewidth}
\vspace{-2.5ex}
\begin{align*}
    &\Longrightarrow^* \\
    &\Longleftarrow
\end{align*}
\end{minipage}
\begin{minipage}{0.43\linewidth}
\begin{mybox}
$a_c\coloneqq A(u,c)$ generates a unique Filippov\- flow
\end{mybox}
\end{minipage}

\medskip
The implication $\Longrightarrow$ comes with an additional requirement:
\begin{itemize}
\item[*]
$u$ also satisfies the Kruzkhov entropy condition for constants $c_1^-,c_2^-,\ldots$ converging to $c$ from \emph{below}, as well as constants $c_1^+,c_2^+,\ldots$ converging to $c$ from \emph{above}.
\end{itemize}
Disregarding this technical requirement, the above diagram carries a useful intuition, namely that \emph{any local production of $\eta_c$-entropy corresponds precisely to a local separation---and hence nonuniqueness---of particle paths of the velocity field $a_c$}. (See Fig.~\ref{fig:selection} on page \pageref{fig:selection} for an illustration of this.)
To see why we cannot generally have well-posedness of \eqref{eq:flow} for \emph{all} $c$ when $f$ is Lipschitz, consider the flux $f(u) = |u|$ and initial data $u_0(x)=\sgn(x)$. The corresponding entropy solution $u(x,t)=\frac{1}{2}\big(\sgn(x-t) + \sgn(x+t)\big)$ is zero in the region $|x|<t$; since $f$ is not differentiable at zero, $A(0,0)$ is not well-defined and, thus, neither is $a_0(x,t)$ in the region $|x|<t$. One may attempt to remedy this by defining $A(0,0)\coloneqq r$, for some (natural) choice of $r\in\R$, resulting in the velocity field
\[
a_0(x,t) = \begin{cases}
    -1 & \text{for } x<-t \\
    r & \text{for } x\in(-t,t) \\
    1 & \text{for } x>t.
\end{cases}
\]
However, no matter the value of $r$ we find that both $x_t = -t$ and $x_t = t$ are solutions of \eqref{eq:flow} starting at zero; nonuniqueness is inescapable.
\end{remark}

\begin{remark}\label{rem: BVrequirementsOfU_0}
We restrict our attention to $\BV_\loc(\R)$-valued weak solutions of \eqref{eq:cl}, but this can be slightly weakened: Only the existence of left- and right-limits in space is needed in Section \ref{sec:flowWellPosed} and \ref{sec:selectionPrinciple}.
\end{remark}

Generalizing Theorem \ref{thm:main} to higher dimensions proves difficult:
\begin{example}\label{ex:multiple_dimensions}
The conclusion of the Main Theorem is not true in multiple dimensions without modifications. Indeed, consider
\[
\begin{cases}
    \partial_t u + \nabla_x\cdot f(u) = 0 \\
    u(0)=u_0,
\end{cases}
\qquad
f(u)=\begin{pmatrix}
        u^2\\
        u^2(u^2-1)\end{pmatrix}, \qquad u_0(x_1,x_2)=\mathrm{sgn}(x_2).
\]
The entropy solution is $u(x,t)=u_0(x)$ for all $(x,t)$. Therefore, setting e.g.~$c=0$ yields
\[
a_0(x,t) = \frac{f(u(x,t))}{u(x,t)} = \begin{pmatrix}
    \sgn(x_2) \\ 0
\end{pmatrix}.
\]
The particle paths \eqref{eq:flow} with velocity $a=a_0$ are well-posed for all initial data $x\in\R^2$ \emph{except} when $x_2=0$; in this case any Lipschitz function $x_t=\big(x_t^{(1)}, x_t^{(2)}\big)$ with $x(0)=0$, $x_t^{(2)}=0$ and $|\dot{x}_t^{(1)}|\leq1$ for a.e.~$t>0$ will be a Filippov solution.

We remark that the above $f$ satisfies usual conditions of ``genuine nonlinearity'' (cf.~e.g.~Ref.~\cite{TadmorRascleBagnerini2005}). It is not clear to us what extra conditions, if any, can ameliorate this situation.
\end{example}

We will also prove the following two variants of the Main Theorem, where we arrive at stronger statements using simpler techniques. The proofs are given in Sections \ref{sec:burgers} and \ref{sec:riemann}, respectively.

\begin{theorem}\label{thm:burgers}
    For Burgers' equation the implication \mbox{(i) $\Rightarrow$ (ii)} of the Main Theorem is true under the relaxed condition $u_0\in L^\infty$.
\end{theorem}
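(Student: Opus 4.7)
For Burgers' flux $f(u) = u^2/2$, the definition \eqref{eq:velocityfield} simplifies to $A(u,c) = (u+c)/2$ for all $u,c$, so the implicit velocity field is $a_c(x,t) = (u(x,t)+c)/2$. The key fact I will exploit is Oleinik's one-sided Lipschitz estimate for entropy solutions of Burgers' equation, which holds for merely $L^\infty$ initial data:
\[
u(x,t) - u(y,t) \leq \frac{x-y}{t}, \qquad x > y,\ t > 0.
\]
Consequently $x \mapsto u(x,t) - x/t$ is nonincreasing, so $a_c(\cdot,t) \in \BV_{\loc}(\R)$ for every $t > 0$ and is one-sided Lipschitz in space with constant $1/(2t)$. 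Note that this already shows the self-regularizing effect that lets us drop the $\BV_{\loc}$ hypothesis on $u_0$ present in the Main Theorem.

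Existence of a Filippov solution to \eqref{eq:flow} on $[s,\infty)$ for any $(x,s) \in \R \times \R_+$ is immediate from the classical Filippov existence theorem, since $a_c$ is measurable and uniformly bounded by $C \coloneqq (\|u\|_{L^\infty} + |c|)/2$. The entire substance of the theorem therefore lies in uniqueness.

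For uniqueness, I plan to run the standard $\frac{d}{dt}(x_t - y_t)^2$ argument for one-sided Lipschitz vector fields. Given two Filippov solutions $x_t$, $y_t$ to \eqref{eq:flow}, with $\dot x_t \in F[a_c](x_t,t)$ and $\dot y_t \in F[a_c](y_t,t)$, the essential left/right-limit characterization of the Filippov set-valued map of a $\BV_\loc$ function promotes the pointwise one-sided Lipschitz bound to the inclusion-level estimate $(x_t - y_t)(\dot x_t - \dot y_t) \leq \frac{1}{2t}(x_t - y_t)^2$ for a.e.\ $t > \max(s,0)$. Gronwall then yields
\[
(x_t - y_t)^2 \leq (x_\tau - y_\tau)^2 \cdot \frac{t}{\tau}, \qquad \max(s,0) < \tau \leq t.
\]
For $s > 0$ we simply take $\tau = s$ and conclude $x_t = y_t$.

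The main obstacle is the case $s = 0$, where the Oleinik constant $1/(2t)$ fails to be integrable at the initial time and the Gronwall trick above has no lower endpoint to anchor on. My plan is to absorb the singularity using the a priori boundedness of $a_c$: since both solutions satisfy $|x_\tau - y_\tau| \leq 2C\tau$, the displayed inequality upgrades to $(x_t - y_t)^2 \leq 4C^2 \tau t$ for every $\tau > 0$, and letting $\tau \to 0^+$ forces $x_t = y_t$ throughout. The whole argument is strictly simpler than that of the Main Theorem because the closed-form expression $A(u,c) = (u+c)/2$ lets Oleinik's one-sided estimate be transferred directly to $a_c$; no analysis of particle paths beyond the standard OSL-Gronwall template is needed.
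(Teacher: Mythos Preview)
Your proposal is correct and follows essentially the same route as the paper: both exploit that $a_c=(u+c)/2$ inherits Oleinik's one-sided Lipschitz bound $\partial_x a_c\le 1/(2t)$, derive the differential inequality for the separation of two Filippov solutions, apply Gr\"onwall from an interior time $\tau>0$, and absorb the nonintegrable singularity at $t=0$ via the a priori bound $|x_\tau-y_\tau|\le O(\tau)$ coming from $\|a_c\|_{L^\infty}<\infty$. The only cosmetic differences are that the paper works with the signed difference $y_t-x_t$ (after ordering) rather than $(x_t-y_t)^2$, and that it carries along two distinct initial points to obtain the quantitative $\hf$-H\"older estimate \eqref{eq:burgersHolderEstimate} as a byproduct before specializing to $x=y$.
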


\begin{theorem}[Particle paths of Riemann problems]\label{thm:riemann}
Let $f\in C^1(\R)$, let $u_L,u_R\in\R$, and let $u$ be the entropy solution of \eqref{eq:cl} with initial data
\[
u_0(x) = \begin{cases}
u_L & \text{for }x<0\\
u_R & \text{for } x>0.
\end{cases}
\]
Fix $c\in\R$ and consider the ODE \eqref{eq:flow}. Then the solution starting at $(x,s)=(0,0)$ is the straight line $x_t=Vt$, where the velocity $V = V(u_L,u_R)$ is given by
\begin{equation}\label{eq: velocityForRiemannParticle}
V(u_L,u_R)\coloneqq\begin{cases}
\min_{v\in[u_L,u_R]}\tfrac{f(v)-f(c)}{v-c} & \text{if } c<u_L\leq u_R\\
\max_{v\in[u_R,u_L]}\tfrac{f(v)-f(c)}{v-c} & \text{if } c<u_R\leq u_L\\
(f_\smallsmile)'(c) & \text{if } u_L\leq c\leq u_R\vspace{3pt}\\
(f_\smallfrown)'(c) & \text{if }  u_R\leq c\leq u_L\\
\max_{v\in[u_L,u_R]}\frac{f(v)-f(c)}{v-c} & \text{if }  u_L\leq u_R<c\\
\min_{v\in[u_R,u_L]}\tfrac{f(v)-f(c)}{v-c} & \text{if } u_R\leq u_L<c,
    \end{cases}
\end{equation}
where $f_\smallsmile$ and $f_\smallfrown$ are the convex and concave envelopes of $f$ between $u_L$ and $u_R$.
\end{theorem}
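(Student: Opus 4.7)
The plan is to exploit the self-similarity of the Riemann problem. Recall that the entropy solution has the form $u(x,t) = w(x/t)$ for a monotonic profile $w$ obtained from Oleinik's envelope construction: for $u_L \leq u_R$, $w$ is the generalized inverse of $(f_\smallsmile)'$ in the rarefaction region and jumps across the chords of $f_\smallsmile$; the case $u_L \geq u_R$ is symmetric using $f_\smallfrown$. In either case the velocity field is self-similar, $a_c(x,t) = A(w(x/t), c) = b(x/t)$, where $b\from\R\to\R$ inherits the monotonicity and jump structure of $w$.

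\textbf{Straight-line ansatz.} Because the Riemann data and the velocity field are scale-invariant under $(x,t)\mapsto(\lambda x, \lambda t)$ for $\lambda > 0$, and because the Main Theorem gives uniqueness of the Filippov solution starting at $(0,0)$, a standard rescaling argument ($\gamma(t) \mapsto \lambda^{-1}\gamma(\lambda t)$) forces this solution to be a ray $x_t = Vt$. Along such a ray $x_t/t \equiv V$, so the Filippov condition collapses to the scalar condition $V = b(V)$ at continuity points of $b$, or $V$ in the convex hull of $\{b(V^-), b(V^+)\}$ at a jump. It therefore suffices to verify that the formula \eqref{eq: velocityForRiemannParticle} produces such a $V$; all prescribed values arise as the $\min$ or $\max$ of the secant-slope function $\psi(v)\coloneqq (f(v)-f(c))/(v-c)$ over the interval between $u_L$ and $u_R$, with the choice dictated by the sign configuration.

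\textbf{Case analysis.} The six cases split into two groups. In cases (i), (ii), (v), (vi), where $c$ lies outside the interval between $u_L$ and $u_R$, the fixed-point equation $V = \psi(w(V))$ combined with the rarefaction identity $V = (f_\smallsmile)'(w(V))$ (or $(f_\smallfrown)'(w(V))$) reduces to the tangency condition $\psi'(w(V)) = 0$. The prescribed $\min$ or $\max$ of $\psi$ selects the critical point consistent with the sign configuration of the case; if the extremum is attained at an endpoint $u_L$ or $u_R$, then $V$ falls outside the rarefaction fan and matches the fixed point in the constant-state region. For the inside cases (iii), (iv), one verifies $V = (f_\smallsmile)'(c)$ (respectively $(f_\smallfrown)'(c)$) by splitting on whether $c$ is a contact point of the envelope or lies on a strict chord. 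If $f(c) = f_\smallsmile(c)$, then $(f_\smallsmile)'(c) = f'(c)$ and $c$ is attained by $w$ along the ray $x = (f_\smallsmile)'(c)\,t$, where $A(c,c) = f'(c)$. If $f(c) > f_\smallsmile(c)$, then $c$ lies strictly inside a chord $[v_1, v_2]$ of $f_\smallsmile$ of slope $s = (f_\smallsmile)'(c)$; a direct computation using $f(c) > f_\smallsmile(c)$ yields $A(v_2, c) < s < A(v_1, c)$, placing $s$ in the Filippov convex hull at the jump of $w$ across $x = s\,t$.

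\textbf{Main obstacle.} The principal challenge is the geometric bookkeeping required to match the formula with the correct fixed point in each subcase: whether the extremal secant from $(c, f(c))$ to the graph of $f$ on $[u_L \wedge u_R, u_L \vee u_R]$ is realized by a tangent inside a rarefaction, by an endpoint of the interval (giving a constant-state fixed point), or by a chord of the envelope (requiring the Filippov convex-hull inclusion). These are most transparently handled by drawing the picture of secants and tangents from $(c, f(c))$ to the envelope $f_\smallsmile$ (or $f_\smallfrown$) between $u_L$ and $u_R$.
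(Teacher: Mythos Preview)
Your proposal is essentially correct and follows the same core approach as the paper: both verify that the ray $x_t = Vt$ satisfies the Filippov inclusion $a_c(Vt+,t)\le V\le a_c(Vt-,t)$, with uniqueness supplied by the Main Theorem. Your scaling argument is a clean way to justify the straight-line ansatz \emph{a priori}; the paper simply poses the ansatz and checks it, so this is a pleasant addition but not essential.

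The packaging of the case analysis differs. You work with the secant function $\psi(v)=(f(v)-f(c))/(v-c)$ and its tangency/endpoint/chord geometry; the paper instead introduces the left- and right-continuous inverses $g_L,g_R$ of $(f_\smallsmile)'$ and the identity $f\circ g_R=f_\smallsmile\circ g_R$. The advantage of the paper's formulation is that it handles uniformly the subcase in (i), (ii), (v), (vi) where the extremizer $\bar u$ of $\psi$ lies strictly inside a chord of the envelope: then $\bar u$ is not a value of the profile $w$, but $g_R(V)$ is the chord endpoint, $f_\smallsmile'$ is constant on $[\bar u,g_R(V)]$, and a short computation using $f(g_R(V))=f_\smallsmile(g_R(V))$ gives $A(g_R(V),c)=V$ exactly. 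You correctly flag this scenario in your ``Main obstacle'' paragraph, but your outline for the outside cases only explicitly treats the interior-critical-point and endpoint scenarios, so the chord subcase would need to be filled in there too.
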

 Finally, as a byproduct of the proof of the implication (i) $\Rightarrow$ (ii) in the Main Theorem, we will be able to prove that the flow of $a_c$ is (at least) H\"older regular. A more precise statement and proof of this result is given in Section \ref{sec:flowRegularity} (see Proposition \ref{prop: holderHalfRegularityWithC2Flux} and Remark \ref{rem: extendingRegularityEstimatesToSAndC}).

\begin{theorem}[Regularity of flow]\label{thm:flowRegular}
    Let $f\in C^2(\R)$, let $u$ be the entropy solution of \eqref{eq:cl}, and let $X^c$ be the flow of $a_c$. Then $X^c=X^c_t(x,s)$ is Lipschitz continuous in $t$ and $\hf$-H\"older regular in $x$ and $s$.
\end{theorem}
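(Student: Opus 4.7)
The plan is to establish each of the three regularity claims in turn. The Lipschitz bound in $t$ is immediate: since $u$ is bounded and $f\in C^2$, the velocity $a_c = A(u(\cdot,\cdot),c)$ is uniformly bounded by $\|f'\|_{L^\infty(K)}$ on a compact set $K\subset\R$ containing $c$ and the essential range of $u$, so every Filippov solution is Lipschitz in $t$ with this constant. The H\"older estimate in $s$ will then follow from the H\"older estimate in $x$ via the semigroup property of the flow (available by uniqueness, from the Main Theorem): from $X^c_t(x,s)=X^c_t(X^c_{s+h}(x,s),s+h)$ and Lipschitz continuity in $t$, we have $|X^c_{s+h}(x,s)-x|\leq C|h|$, and applying the $\hf$-H\"older bound in the spatial argument at time $s+h$ yields $|X^c_t(x,s)-X^c_t(x,s+h)|\leq C|h|^{1/2}$ (symmetrically for $h<0$).

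The substantive step is the H\"older estimate in $x$. Fix $s$, $t>s$, and $x_1<x_2$, and set $\gamma_i(\tau)=X^c_\tau(x_i,s)$, $L(\tau)=\gamma_2(\tau)-\gamma_1(\tau)\geq 0$ (uniqueness forces particle paths to stay ordered). The target is $L(t)\leq C\,L(s)^{1/2}$ on bounded time windows. The plan is to combine two identities on the moving strip $[\gamma_1(\tau),\gamma_2(\tau)]$. First, \emph{mass conservation}: using $\dot\gamma_i = a_c(\gamma_i,\cdot)$ and the weak formulation of the conservation law, one verifies directly that $M\coloneqq\int_{\gamma_1(\tau)}^{\gamma_2(\tau)}(u-c)\,dz$ is independent of $\tau$, so $|M|\leq\|u_0-c\|_{L^\infty}L(s)$. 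Second, an \emph{energy-type estimate}: applying the entropy inequality for the convex entropy $\eta(u)=(u-c)^2$, with flux $q(u)=2(u-c)f(u)-2\int_c^u f$, on the moving strip, and simplifying the boundary contributions via $\dot\gamma_i=a_c(\gamma_i,\cdot)$, one obtains
\begin{equation*}
    \frac{d}{d\tau}\!\int_{\gamma_1(\tau)}^{\gamma_2(\tau)}(u-c)^2\,dz \;\leq\; G\bigl(u(\gamma_2,\tau)\bigr)-G\bigl(u(\gamma_1,\tau)\bigr),
\end{equation*}
where $G(u)=2\int_c^u f - (u-c)(f(u)+f(c))$ satisfies $G(c)=G'(c)=G''(c)=0$ (so $|G(u)|\leq\tfrac{1}{3}\|f''\|_\infty|u-c|^3$ by the $C^2$ hypothesis). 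Feeding these two identities into the differential formula $\dot L(\tau) = A(u(\gamma_2),c)-A(u(\gamma_1),c)$ and using the Lipschitz dependence of $A(\cdot,c)$ on $u$ (again from $f\in C^2$) should deliver the H\"older bound.

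The main obstacle is this last step: both integral identities above naturally produce \emph{lower} bounds on $L(t)$ via Cauchy--Schwarz ($M^2\leq L(t)\!\int(u-c)^2$), whereas the theorem demands an upper bound. Extracting it requires simultaneously exploiting the cubic smallness of $G$ near $u=c$, the average smallness of $u-c$ on the strip (enforced by conservation of $M$), and the Lipschitz dependence of $A(\cdot,c)$ on $u$. All three ingredients fail for a merely $C^1$ flux, which is precisely why the theorem is stated under the stronger $C^2$ hypothesis; the sharpness of the $\hf$ exponent is witnessed by the simple Burgers rarefaction $u_0=\sgn$, where a particle starting at $x=\varepsilon>0$, $s=0$ lies at $X^0_t(\varepsilon,0)=\sqrt{2\varepsilon t}$ for $t\geq 2\varepsilon$.
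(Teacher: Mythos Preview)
Your reductions for the $t$- and $s$-regularity are correct and match the paper's treatment (see Remark~\ref{rem: extendingRegularityEstimatesToSAndC}). The gap is in the spatial estimate, and you have identified it yourself: the two integral quantities you produce (conserved signed mass $M$ and the quadratic entropy $\int(u-c)^2$) only yield \emph{lower} bounds on $L(t)$ via Cauchy--Schwarz, and your boundary term $G(u(\gamma_i))$ is controlled by $|u(\gamma_i)-c|^3$, which can be of order one since $u$ at the interface is in no way forced to be close to $c$. There is no mechanism in your setup that converts these ingredients into an \emph{upper} bound on $L(t)$; the ``feeding into $\dot L = A(u(\gamma_2),c)-A(u(\gamma_1),c)$'' step is not a plan but a restatement of the difficulty.

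The paper's argument avoids this obstruction by choosing a different entropy. For a parameter $k>0$ it uses
\[
\eta_{c,k}(v) \coloneqq 1 \vee \frac{|v-c|}{k},
\]
with associated flux $q_{c,k}$. Two features make this work. First, $\eta_{c,k}\geq 1$ everywhere, so $L(\tau)\leq \int_{\gamma_1(\tau)}^{\gamma_2(\tau)}\eta_{c,k}(u)\,dz$ trivially---this is how one gets an \emph{upper} bound on $L$ from an integral quantity. Second, the algebraic identity $\eta_{c,k}(v)A(v,c)-q_{c,k}(v)=A\bigl([v]_{c-k}^{c+k},c\bigr)$ (where $[\cdot]_{c-k}^{c+k}$ denotes truncation to $[c-k,c+k]$) means that the boundary contribution in the local entropy-growth estimate (Lemma~\ref{lemma: explicitLocalEntropyControl}) is a difference of values of $A(\cdot,c)$ evaluated at arguments within distance $k$ of $c$, hence bounded by $Lk$ when $f'$ is $L$-Lipschitz---\emph{regardless} of the actual boundary values $u(\gamma_i\pm)$. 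Integrating in time and bounding the initial value by $(y-x)\bigl(1+\|u_0-c\|_{L^\infty}/k\bigr)$ gives
\[
L(t)\leq (y-x)\Bigl(1+\tfrac{\|u_0-c\|_{L^\infty}}{k}\Bigr)+Lk(t-s),
\]
and optimizing over $k>0$ yields the $\hf$-H\"older bound. The truncation is precisely what decouples the growth rate from the uncontrolled boundary values of $u$; your quadratic entropy has no analogue of this.
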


\subsection{Connections to prior work}
Hyperbolic conservation laws have received an enormous amount of attention from researchers over the past century, but as far as we know, the deep connection to linear continuity equations that we develop in this paper has not been studied before. Some prior works have touched upon ideas developed in this paper, and we try to summarize them here.

Colombo and Marson \cite{Colombo_Marson_2003} study the well-posedness of an ODE arising from the Lighthill--Whitham--Richards model for vehicular traffic. They show that if $u$ is the entropy solution of \eqref{eq:cl} with concave flux, then the ODE $\dot{x}_t = w(u(x_t,t))$ is well-posed (in the Filippov sense) provided $w\from\R\to\R$ satisfies a compatibility condition. In particular, the choice $w(u)=\frac{f(u)}{u}$ becomes a special case of the implication (i) $\Rightarrow$ (ii) of our Main Theorem. They also prove a regularity estimate on the flow of this ODE.

Poupaud and Rascle\cite{PoupaudRascle97} study multidimensional, linear continuity equations with a (possibly discontinuous) velocity field satisfying a one-sided Lipschitz condition. In their Section 5, which is somewhat disconnected from the main part of their paper (the velocity field $a_0$ will generally \emph{not} satisfy any one-sided Lipschitz condition), they show that the particle paths \eqref{eq:flow} for $c=0$ are well-posed for Burgers' equation for the Riemann problem. This is therefore a special case of Theorems \ref{thm:burgers} and \ref{thm:riemann}. Intriguingly, they end their paper by saying that ``{In some sense the above description says that the Burgers equation is \emph{really} a conservation law!}'' We return to a special case of their computation in Example \ref{ex:burgers_rarefaction}.

Variants of the velocity field $a_c \coloneqq \frac{f(u)-f(c)}{u-c}$ and its flow appear in several places in the literature, most notably in Oleinik's seminal works \cite{Oleinik57,Oleinik59}. If $u_1,u_2$ are weak solutions of \eqref{eq:cl} then their difference $v\coloneqq u_1-u_2$ will be a weak solution of the continuity equation
\begin{equation}\label{eq:dualequation}
\partial_t v + \partial_x (bv) = 0, \qquad \text{where } b \coloneqq A(u_1,u_2)= \frac{f(u_1)-f(u_2)}{u_1-u_2}.
\end{equation}
Oleinik used this fact, along with structural assumptions on $u_1,u_2$ (the Oleinik entropy condition), in a duality argument to prove that $u_1,u_2$ coincide whenever their initial data coincide. Note that $u_2\equiv c$ is always a solution of \eqref{eq:cl}, so our velocity $a_c$ is a special case of the above velocity $b$. Oleinik's idea has since been used in several settings: We mention here the generalization to multiple dimensions by Conway and Smoller \cite{ConwaySmoller67} and the duality-based error estimates by Tadmor~et~al.\ (see~Ref.~\cite{Tadmor1991Error} and references therein). It is not clear if our results can be generalized to the velocity field $b$ in \eqref{eq:dualequation}. Indeed, if $u_1=u_2$ in some space-time region then $b=f'(u_1)$, so the particle paths coincide with characteristics; the ill-posedness of characteristics seen for instance in Example \ref{ex:burgers_rarefaction} therefore shows that the particle paths of \eqref{eq:dualequation} might not be well-posed in general.

\subsection{Overview of the paper}
In Section \ref{sec:preliminaries} we give some necessary background information on scalar conservation laws, Filippov solutions of ODEs, and functions of bounded variation. Section \ref{sec:burgers} is dedicated to Burgers' equation; we single out this special case because the special structure of the equation allows for an insightful and simple proof of (part of) the Main Theorem. Similarly, Section \ref{sec:riemann} considers the general Riemann problem, where the special structure of the entropy solution allows for an elementary proof with explicit computations. In Section \ref{sec:flowWellPosed} we prove the sufficiency part of the Main Theorem, as well as the regularity estimate on the flow outlined in Theorem \ref{thm:flowRegular}. In Section \ref{sec:selectionPrinciple} we prove the necessity part of the Main Theorem.

\section{Preliminaries}\label{sec:preliminaries}
We collect here some preliminaries which will be needed later in the paper.

\subsection{Scalar conservation laws}
A function $u\in L^\infty(\R\times\R_+)$ is a \emph{weak solution} of the conservation law \eqref{eq:cl} if it holds in the sense of distributions, that is,
\[
\int_{\R_+}\int_\R u\partial_t\phi + f(u)\partial_x\phi\,dx\,dt + \int_\R u_0(x)\phi(x,0)\,dx = 0
\]
for all $\phi\in C_c^\infty(\R\times\R_+)$. Weak solutions are generally not unique, and selection criteria in the form of \emph{entropy conditions} must be imposed. We say that a pair of functions $(\eta,q)$ is an \emph{entropy pair} if $\eta\from\R\to\R$ is convex and $q\from\R\to\R$ satisfies $q' = \eta'f'$. We say that a weak solution $u$ of \eqref{eq:cl} is an \emph{entropy solution} if
\begin{equation}\label{eq:entropycondition}
    \int_{\R_+}\int_\R \eta(u)\partial_t\phi + q(u)\partial_x\phi\,dx\,dt + \int_\R \eta(u_0(x))\phi(x,0)\,dx \geq 0
\end{equation}
for all $0\leq \phi\in C_c^\infty(\R\times\R_+)$ and all entropy pairs $(\eta, q)$. Kruzkhov \cite{Kru70} proved that there exists a unique entropy solution of \eqref{eq:cl} for any $u_0\in L^\infty(\R)$. If $u_0\in \BV_\loc(\R)$ then also $u(t)\in \BV_\loc(\R)$ for all $t>0$ (see Section \ref{sec:bv-functions}). He showed moreover that it is enough to impose the entropy condition \eqref{eq:entropycondition} for the so-called \emph{Kruzkhov entropy pairs}
\begin{equation}\label{eq:kruzkoventropy}
\eta_k(u) = |u-k|, \qquad q_k(u) = \sgn(u-k)(f(u)-f(k))
\end{equation}
with parameter $k\in\R$. We remark here that we can write
\begin{equation}\label{eq: algebraicIdentitySatisfiedByTheKruzkovEntropies}
     \eta_k(u)a_k=q_k(u),
\end{equation}
where $a_k=A(u,k)$ is as given in \eqref{eq:velocityfield}. This property turns out to be essential in our analysis. We refer to Kruzkhov \cite{Kru70} and the monographs by Holden~and~Risebro\cite{HoldenRisebro} and Dafermos\cite{ Dafermos} for more theory of hyperbolic conservation laws.

\subsection{Filippov solutions}\label{sec:filippov}
For a merely measurable function $a=a(x, t)$, the sense in which the ODE \eqref{eq:flowLinear} (and \eqref{eq:flow}) is understood is not standard. We choose to work with \emph{Filippov solutions}, which are absolutely continuous functions $t\mapsto x_t$ satisfying the differential inclusion
\begin{equation} \label{eq:filippov_ode}
    \underline{a}(x_t, t) \leq \dot{x}_t \leq \overline{a}(x_t, t) \qquad \text{for a.e.~} t,
\end{equation}
where
\begin{equation} \label{eq:lim_ess_sup}
    \underline{a}(x, t) \coloneqq \lim_{\delta \downarrow 0} \essinf_{y \in B_\delta(x)} a(y, t), \qquad
    \overline{a}(x, t) \coloneqq \lim_{\delta \downarrow 0} \esssup_{y \in B_\delta(x)} a(y, t),
\end{equation}
and where $B_\delta(x)\coloneqq (x-\delta, x+\delta)$ (these are the lower and upper semicontinuous envelopes of $a$). Here, as before, $x_t$ denotes the function $x$ evaluated at $t$, and $\dot{x}_t$ denotes the $t$-derivative of this function. This solution concept resolves the challenges of nonexistence and sensitivity to modifications within equivalence classes that typically arise when dealing with ODEs for irregular velocity fields. Indeed, Filippov \cite{Filippov60} proved existence of solutions for any velocity field $a\in L^1_\loc(\R_+; L^\infty(\R))$. Moreover, if the solution is unique, it depends continuously on initial conditions and $L^1_\loc(\R_+; L^\infty(\R))$-perturbations of the right-hand side (see \cite[Theorems 3 and 4]{Filippov60}). We remark that the concept of Filippov solutions is stronger than the ``full solutions'' treated by Conway \cite{conway_1967} in the context of transport equations. Moreover, the concept of Filippov solutions coincides with solutions in the usual sense whenever $a$ is continuous in $x$.

\begin{definition}\label{def:filippovflow}
    Let $a\from\R\times\R_+\to\R$ be measurable. We say that $X$ is a \emph{Filippov flow (generated by $a$)} if $X$ is a measurable function $X=X_t(x,s)$, defined for all $x\in\R$ and $0\leq s\leq t$, such that
    \begin{enumerate}[label=(\roman*)]
        \item $X_s(x,s) = x$ for all $x\in\R$ and $s\geq0$,
        \item for all $x\in\R$ and $s\geq0$, the map $t\mapsto X_t(x,s)$ is absolutely continuous and the derivative $\dot{X}_t(x,s)= \frac{d}{dt}X_t(x,s)$ satisfies
        \begin{equation}\label{eq:filippov_ode_flow}
         \underline{a}(X_t(x,s),t)\leq \dot{X}_t(x,s) \leq \overline{a}(X_t(x,s),t) \qquad \text{for a.e.~} t>s,
        \end{equation}
        that is, $t\mapsto X_t(x,s)$ is a Filippov solution of $\dot{x}=a(x,t)$,
        \item\label{prop:semigroup} $X_t(X_\tau(x, s),\tau) = X_t(x, s)$ for all $x\in\R$ and all $0\leq s\leq \tau \leq t$.
    \end{enumerate}
    If $X$ is also jointly continuous we call it a \emph{continuous Filippov flow}.
\end{definition}

\begin{theorem}\label{thm: preliminariesOnFilippovFlow}
    Let {$a\in L^1_\loc(\R_+; L^\infty(\R))$}. Then
    \begin{enumerate}[label=(\roman*)]
        \item There exists at least one Filippov flow generated by $a$.
        \item The following are equivalent:
        \begin{itemize}
            \item There exists exactly one Filippov flow generated by $a$.
            \item For every $x_s\in\R$ and $s\geq0$, there exists exactly one Filippov solution of $\dot{x}_t=a(x_t,t)$ starting at $(x_s,s)$.
            \item For every $x_0\in\R$ there exists exactly one Filippov solution of $\dot{x}_t=a(x_t,t)$ starting at $(x_0,0)$.
        \end{itemize}
    \item
    If the Filippov flow $X=X_t(x,s)$ is unique, then it is continuous in $(t,x,s)$, and the map $x\mapsto X_t(x,s)$ is surjective and nondecreasing for any $0\leq s\leq t$.
    \end{enumerate}
\end{theorem}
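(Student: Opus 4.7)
My plan is to address the three parts in the order (i), (ii), (iii), since the existence construction in (i) provides the alternative flows needed to prove (ii), and uniqueness of the flow (which by (ii) is equivalent to pointwise uniqueness of trajectories) is then available for the structural properties in (iii).

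For (i), I would invoke Filippov's existence theorem \cite{Filippov60} for bounded measurable vector fields to produce, at each initial datum $(x,s)$, a nonempty compact solution funnel in $C([s,T];\R)$. In one space dimension this funnel has a pointwise maximal element $t\mapsto X^+_t(x,s)$; selecting it gives a candidate flow. The condition $X_s(x,s)=x$ and the Filippov inclusion hold by construction, and the semigroup property follows because the concatenation of $X^+_\cdot(x,s)|_{[s,\tau]}$ with $X^+_\cdot\bigl(X^+_\tau(x,s),\tau\bigr)$ on $[\tau,t]$ is again a Filippov solution from $(x,s)$, and is pointwise $\geq$ any such solution, hence equals $X^+_\cdot(x,s)$.

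For (ii), I would first observe the easy implications: pointwise uniqueness at every $(x_s,s)$ trivially implies pointwise uniqueness at $s=0$, and also implies uniqueness of the flow (two flows give two Filippov solutions at each initial datum, which must then coincide). Conversely, uniqueness of the flow forces pointwise uniqueness at every $(x_s,s)$, since otherwise the maximal and minimal selections from (i) would be distinct flows. The nontrivial step is to propagate pointwise uniqueness from $s=0$ to $s>0$: if two Filippov solutions $y^1,y^2$ depart from some $(x_s,s)$, I would extend a single Filippov trajectory $\tilde x$ backwards on $[0,s]$ with $\tilde x_s=x_s$ and concatenate $\tilde x$ with each of $y^1,y^2$ to obtain two distinct Filippov solutions on $[0,T]$ starting at $(\tilde x_0,0)$, contradicting the hypothesis.

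For (iii), with the flow now unique, continuity of $(t,x,s)\mapsto X_t(x,s)$ is precisely Filippov's continuous-dependence theorem \cite{Filippov60}. For monotonicity, suppose $x_1<x_2$ but $X_t(x_1,s)>X_t(x_2,s)$ at some $t>s$; by continuity in $t$ the trajectories cross at some intermediate $\tau\in(s,t)$, and forward uniqueness from the crossing point combined with the semigroup property forces $X_{t'}(x_1,s)=X_{t'}(x_2,s)$ for all $t'\geq\tau$, contradicting $X_t(x_1,s)>X_t(x_2,s)$. Surjectivity of $x\mapsto X_t(x,s)$ follows from the a priori bound $|X_t(x,s)-x|\leq\int_s^t\|a(\tau)\|_{L^\infty}\,d\tau$, which forces $X_t(x,s)\to\pm\infty$ as $x\to\pm\infty$; monotonicity and continuity then close the argument by the intermediate value theorem.

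The chief technical point is the backwards extension used in (ii), which I would justify by time-reversal: if $y_\sigma\coloneqq\tilde x_{s-\sigma}$, then $y$ satisfies the Filippov differential inclusion for $b(y,\sigma)\coloneqq -a(y,s-\sigma)$, because the envelopes from \eqref{eq:lim_ess_sup} transform as $\underline b(y,\sigma)=-\overline a(y,s-\sigma)$ and $\overline b(y,\sigma)=-\underline a(y,s-\sigma)$. Since $b\in L^1_\loc([0,s];L^\infty(\R))$, Filippov's forward existence theorem applied to $b$ with initial datum $y_0=x_s$ yields the required trajectory, which reverses back to $\tilde x$ with $\tilde x_0=y_s$.
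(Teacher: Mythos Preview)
Your proof sketch is correct and well-organized. Note, however, that the paper does not actually prove this theorem: immediately after stating it, the authors write ``We refer to the excellent paper \cite{Filippov60} and the monographs \cite{filippov_1988, hormander_1997} for details,'' treating it as a known preliminary. So there is no in-paper proof to compare against; what you have written is a faithful and self-contained unpacking of the cited material.

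A few minor points worth tightening if you flesh this out. In (i) you only exhibit the maximal selection $X^+$ as a flow, but in (ii) you invoke both $X^+$ and $X^-$ as distinct flows; the minimal selection satisfies the semigroup property by the mirror of your argument, so just say so. Also, Definition~\ref{def:filippovflow} requires $X$ to be measurable, which you do not verify; it follows because $X^+_t(\cdot,s)$ is upper semicontinuous (limits of maximal solutions are solutions, hence bounded above by the maximal one) and $t\mapsto X^+_t(x,s)$ is Lipschitz. Finally, your semigroup argument in (i) is cleaner if phrased directly: the concatenation $C$ is a Filippov solution from $(x,s)$, so $C\le X^+$; conversely $X^+|_{[\tau,T]}$ is a Filippov solution from $(X^+_\tau(x,s),\tau)$, hence $\le C|_{[\tau,T]}$, and on $[s,\tau]$ they agree, giving $C=X^+$. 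This avoids the detour through ``pointwise $\ge$ any such solution.''
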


We refer to the excellent paper by Filippov \cite{Filippov60} and the monographs by Filippov\cite{filippov_1988} and H\"ormander \cite{hormander_1997} for details. When the right-hand side of \eqref{eq:flowLinear} is continuous a.e.~in $x$, the following lemma (which we were unable to find in the literature) holds.

\begin{lemma}\label{lemma:convergence_universal_representative}
    Let $a\in L^1_\loc(\R_+; L^\infty(\R))$ be continuous in the $x$ variable almost everywhere. Let $X=X_t(x,s)$ be a continuous Filippov flow generated by $a$. Then
    \begin{equation} \label{eq:convergence_time_difference}
        \lim_{\varepsilon \downarrow 0}  \frac{X_{t+\varepsilon}(x, t) - x}{\varepsilon} = a(x, t) \qquad \text{in}\ L^1_{\loc}(\R\times \R_+).
    \end{equation}
\end{lemma}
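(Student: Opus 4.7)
The plan is to reduce convergence in $L^1_\loc(\R\times\R_+)$ to almost-everywhere pointwise convergence via a generalized dominated convergence argument. From the Filippov inclusion \eqref{eq:filippov_ode_flow} one has $|\dot{X}_\tau(x,t)|\leq \norm{a(\cdot,\tau)}_{L^\infty(\R)}$, so
\[
\left|\frac{X_{t+\varepsilon}(x,t)-x}{\varepsilon}\right| \leq \frac{1}{\varepsilon}\int_t^{t+\varepsilon}\norm{a(\cdot,\tau)}_{L^\infty(\R)}\,d\tau,
\]
and the right-hand side converges in $L^1_\loc(\R_+)$ to $\norm{a(\cdot,t)}_{L^\infty(\R)}\geq\abs{a(x,t)}$ as $\varepsilon\downarrow 0$ (by continuity of translation in $L^1$). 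Provided pointwise a.e.~convergence holds, a generalized dominated convergence theorem (e.g.~Vitali) delivers the $L^1_\loc$ conclusion.

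For the pointwise convergence, I would fix a ``good'' point $(x_0,t_0)$ at which $a(\cdot,t_0)$ is continuous at $x_0$---a set of full two-dimensional Lebesgue measure by hypothesis. Integrating the Filippov inclusion over $[t_0,t_0+\varepsilon]$ yields
\[
\frac{X_{t_0+\varepsilon}(x_0,t_0)-x_0}{\varepsilon}\leq \frac{1}{\varepsilon}\int_{t_0}^{t_0+\varepsilon}\overline{a}(X_\tau(x_0,t_0),\tau)\,d\tau.
\]
By joint continuity of $X$, $X_\tau(x_0,t_0)\in B_\delta(x_0)$ for $\tau\in[t_0,t_0+\varepsilon]$ once $\varepsilon$ is sufficiently small; a ball-inclusion argument from the definition \eqref{eq:lim_ess_sup} then gives $\overline{a}(X_\tau(x_0,t_0),\tau)\leq M_\delta(\tau):=\esssup_{B_\delta(x_0)}a(\cdot,\tau)$. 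Since $M_\delta(\tau)\leq \norm{a(\cdot,\tau)}_{L^\infty(\R)}\in L^1_\loc(\R_+)$, the Lebesgue differentiation theorem yields $\limsup_{\varepsilon\downarrow 0}\frac{X_{t_0+\varepsilon}(x_0,t_0)-x_0}{\varepsilon}\leq M_\delta(t_0)$ at Lebesgue points of $M_\delta$; a symmetric argument with $m_\delta(\tau):=\essinf_{B_\delta(x_0)}a(\cdot,\tau)$ bounds the liminf from below by $m_\delta(t_0)$.

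Finally, taking $\delta=1/n$ for $n\in\N$ and intersecting the countably many exceptional sets, continuity of $a(\cdot,t_0)$ at $x_0$ gives $M_{1/n}(t_0),m_{1/n}(t_0)\to a(x_0,t_0)$ as $n\to\infty$, pinning the difference quotient limit to $a(x_0,t_0)$. The principal technical subtlety will be showing that the exceptional set in $(x_0,t_0)$-space---where Lebesgue differentiation fails for some $n$---has two-dimensional Lebesgue measure zero: for each fixed $x_0$ the one-dimensional null property in $t_0$ is standard, and a Fubini argument---together with a measurability check for $M_\delta$, e.g.~by approximating $a$ via standard $x$-mollifications so that $M_\delta$ can be written as a decreasing limit of continuous-in-$x$ quantities---delivers the two-dimensional statement.
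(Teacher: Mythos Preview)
Your argument is sound; the measurability subtlety you flag can indeed be settled along the lines you sketch (for a jointly Borel representative of $a$, the map $(x_0,\tau)\mapsto M_\delta^{x_0}(\tau)=\esssup_{B_\delta(x_0)}a(\cdot,\tau)$ is jointly measurable---for instance by writing it as a countable supremum over rational levels $c$ of the indicators of $\{|\{a(\cdot,\tau)>c\}\cap B_\delta(x_0)|>0\}$, each of which is measurable by Tonelli). The paper, however, proceeds quite differently and sidesteps pointwise convergence, Lebesgue differentiation, and the attendant Fubini/measurability bookkeeping altogether. It introduces an auxiliary function $b(x,t)\coloneqq \dot X_t(x,t)$ wherever this forward derivative exists (and $b\coloneqq a$ elsewhere), uses the semigroup property to write the difference quotient \emph{exactly} as $\int_0^1 b(X_{t+\varepsilon s}(x,t),t+\varepsilon s)\,ds$, checks from the Filippov inclusion and the continuity hypothesis that $b=a$ a.e., and then obtains the $L^1_\loc$ limit directly from a change of variables and continuity of translation in $L^1_\loc$. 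Your route is more elementary in spirit---just a squeeze plus differentiation of integrals, upgraded via Vitali---but pays for this with the measurability layer; the paper's device of replacing $a$ by $b$ is slicker and delivers the $L^1_\loc$ convergence in a single stroke, without ever isolating a pointwise limit.
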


\begin{proof}
    Note first that the continuity assumption is equivalent to stating that for some representative of $a$ in its equivalence class, we have that for a.e.~$t>0$, the function $x\mapsto a(x,t)$ is continuous for a.e.~$x\in\R$. Let
    \[
    A \coloneqq \bigl\{(x, t) \in \R \times \R_+ : \dot{X}_t(x, t) \text{ exists and satisfies \eqref{eq:filippov_ode_flow} with } s=t\bigr\},
    \]
   where $\dot{X}_t(x,t)$ should be understood as the forward derivative in $t$ at $s=t$ (since $X$ is not defined for $t<s$). Define further
    \begin{equation*}
        b(x, t) \coloneqq
        \begin{cases}
            \dot{X}_t(x, t) & \text{if } (x, t) \in A \\
            a(x, t) & \text{else}
        \end{cases}
        \qquad \text{for } x\in\R, t\geq0.
    \end{equation*}
    Note that $b$ is everywhere well-defined, that $b\in L^1_\loc(\R_+; L^\infty(\R))$, and that if $\dot{X}_t(x, s)$ exists and satisfies \eqref{eq:filippov_ode_flow} then $(X_t(x, s), t) \in A$, by the semigroup property of the flow (condition \ref{prop:semigroup} of Definition \ref{def:filippovflow}). Define
    \begin{equation*}
        J \coloneqq \bigl\{(x, t) \in \R \times \R_+ : a(\cdot, t) \text{ is discontinuous at } x\bigr\}.
    \end{equation*}
    If $(x,t)\in A\setminus J$ then $\dot{X}_t(x,t)$ exists, equals $b(x,t)$, and lies in $[\underline{a}(X_t(x,t),t),\overline{a}(X_t(x,t),t)] = \{a(X_t(x,t),t)\}$, so necessarily $a=b$ on $A\setminus J$.
    Since $a$ and $b$ can differ only on the set $J \cap A$, and the set $J$ has zero measure in $\R \times \R_+$, it suffices to prove \eqref{eq:convergence_time_difference} for $b$. For any $(x, t) \in \R \times \R_+$ we can write
    \begin{equation} \label{eq:universal_rep}
        \frac{X_{t+\varepsilon}(x, t) - x}{\varepsilon} = \int_0^1 \dot{X}_{t+\varepsilon s}(x, t)\, ds = \int_0^1 b(X_{t + \varepsilon s}(x, t), t + \varepsilon s)\, ds,
    \end{equation}
    since $(X_{t + \varepsilon s}(x, t), t + \varepsilon s) \in A$ for a.e.~$s\in (0, 1)$. This means that
    \begin{equation*}
    \begin{aligned}
        &\nqquad \int_{\R_+} \int_\R \bigg| \frac{X_{t+\varepsilon}(x, t) - x}{\varepsilon} - b(x, t) \bigg|\varphi(x, t)\, dx\, dt \\
        & = \int_{\R_+} \int_\R \bigg| \int_0^1 b(X_{t + \varepsilon s}(x, t), t + \varepsilon s) - b(x, t) \, ds \bigg|\varphi(x, t)\, dx\, dt \\
        & \leq \int_0^1 \int_{\R_+} \int_\R \bigl|b(X_{t}(x, t - \varepsilon s), t) - b(x, t - \varepsilon s)\big|\varphi(x, t - \varepsilon s)\,dx\,dt\,ds
    \end{aligned}
    \end{equation*}
    for all $\varphi \in C_c^\infty(\R \times \R_+)$, where we have used Fubini and a change of variables in the last inequality. Seeing that
    \begin{equation*}
    \begin{aligned}
        \big|b(X_{t}(x, t - \varepsilon s), t) - b(x, t - \varepsilon s)\big| & \leq \big|b(x + o(1), t) - b(x, t)\big| + \big|b(x, t) - b(x, t - \varepsilon s)\big|
    \end{aligned}
    \end{equation*}
    where {we have used the fact that $X_t(x, t-\varepsilon s)-x = o(1)$} as $\eps\downarrow 0$ for all $x,t,s$, the conclusion follows by the continuity of translations in $L^1_\loc(\R\times \R_+)$.
\end{proof}

\subsection{Functions of bounded variation}
\label{sec:bv-functions}

A function $u \in L^1_\loc(\R)$ belongs to $\BV_\loc(\R)$ if its derivative exists as a locally finite Radon measure $Du$, that is
\begin{equation*}
    \int_\R u \partial_x \varphi\, dx = - \int_\R \varphi\, d(Du)
\end{equation*}
for all $\varphi \in C_c^\infty(\R)$. By a locally finite Radon measure on $\R$ we shall mean a measure $\mu$ on the Borel $\sigma$-algebra $\borel(\R)$ which is a finite measure on all compact subsets of $\R$, and we denote by $\radon_{\loc}(\R)$ the space of such measures. If $\mu$ is also a finite measure on $(\R, \borel(\R))$, we drop the local assumption. The total variation measure $\abs{\mu}$ of $\mu \in \radon_\loc$, defined as
\begin{equation*}
    \abs{\mu}(I) = \sup \bigg\{ \int_{\R} \psi(x)\, d\mu : \psi \in C_c(I),\ \norm{\psi}_{C^0} \leq 1 \bigg\}
\end{equation*}
for all open and bounded $I \subset \R$, is a positive, locally finite Radon measure. If $u \in \BV_\loc(\R)$ we always choose to work with a good representative of $u$, that is, a function which minimizes the essential pointwise variation over all functions within its equivalence class. Let $u$ be a good representative and $A \subset I$ the set of atoms of $Du$ on a bounded open interval $I \subset \R$, that is, $A$ comprises points $x \in I$ for which $Du(\{x\}) \neq 0$. Then $A$ is at most countable, $u$ is continuous on $I \setminus A$ and has jump discontinuities at all points in $A$. Furthermore, the derivative has the decomposition
\begin{equation*}
    Du = Du^a + Du^c + \sum_{x \in A} \bigl(u(x+) - u(x-)\bigr) \delta_x,
\end{equation*}
where $Du^a \in L^1_{\loc}(\R)$ is an absolutely continuous measure, $Du^c$ is the Cantor part which is nonatomic, meaning that $Du^c(\{x\}) = 0$ for every $x \in I$, and the last term is a purely atomic measure, with coefficients $u(x\pm) \coloneqq \lim_{h\downarrow 0} u(x\pm h)$, concentrated on $A$. We refer to Ref.~\cite{ambrosio_fusco_pallara_2000} for an overview of the theory for functions of bounded variation.

\section{Particle paths for Burgers' equation}\label{sec:burgers}

Entropy solutions of Burgers' equation with initial data $u_0 \in L^\infty(\R)$ satisfies Oleinik's one-sided estimate \cite{Oleinik57}
\begin{equation}\label{eq:oleinikEC}
    \partial_x u(x, t) \leq \frac{1}{t}
\end{equation}
for all $t > 0$. This special structure implies well-posedness of particle paths through a direct mechanism of uniqueness. The same phenomenon was observed for the Riemann problem in \cite[Section 5]{PoupaudRascle97}, and we include an example from that paper to contrast the well-posedness of particle paths \eqref{eq:flow} to the possible ill-posedness of characteristics.

\begin{example}\label{ex:burgers_rarefaction}
This example is taken from Poupaud and Rascle \cite[Section 5]{PoupaudRascle97}. Consider Burgers' equation with initial data and corresponding entropy solution
    \begin{equation*}
    u_0(x) = \begin{cases}
        0 & \text{if } x<0 \\ 1 & \text{if }x>0,
    \end{cases} \qquad
    u(x,t) =
    \begin{cases}
        0 & \text{if }x<0 \\
        x/t & \text{if }0<x<t \\
        1 & \text{if }x>t.
    \end{cases}
    \end{equation*}
    The characteristic equation is
    \begin{equation*}
        \dot\xi_t = f'(u(\xi_t,t)=u(\xi_t,t), \qquad \xi_0 = x,
    \end{equation*}
    and the equation for particle paths with, say, $c = 0$ takes the form
    \begin{equation} \label{eq:burgers_flow_0}
        \dot{x}_t = a_0(x, t) = \frac{1}{2}u(x_t, t), \qquad x_0 = x.
    \end{equation}
    Since $\xi_t = vt$ is a solution with initial condition $x = 0$ for any $v \in [0, 1]$, the characteristic equation has infinitely many solutions. On the other hand, the particle path with initial condition $x = 0$ is unique and given by $x_t = 0$. In conclusion, the characteristics and the particle paths for this problem are given by
    \begin{equation} \label{eq:burgers_flow_example}
        \Xi_t(x) =
        \begin{cases}
            x & \text{if }x < 0 \\
            vt & \text{if }x = 0 \\
            x + t & \text{if }x > 0,
        \end{cases}
        \qquad X_t(x) =
        \begin{cases}
            x & \text{if }x \leq 0 \\
            \sqrt{2 x t} & \text{if }0 < x \leq \frac{t}{2} \\
            x + t/2 & \text{if }0 < \frac{t}{2} < x
        \end{cases}
    \end{equation}
    for an arbitrary $v \in [0, 1]$, see Fig.~\ref{fig:burgers_ex}.
    \begin{figure}
        \centering
            \includegraphics[width=0.32\textwidth]{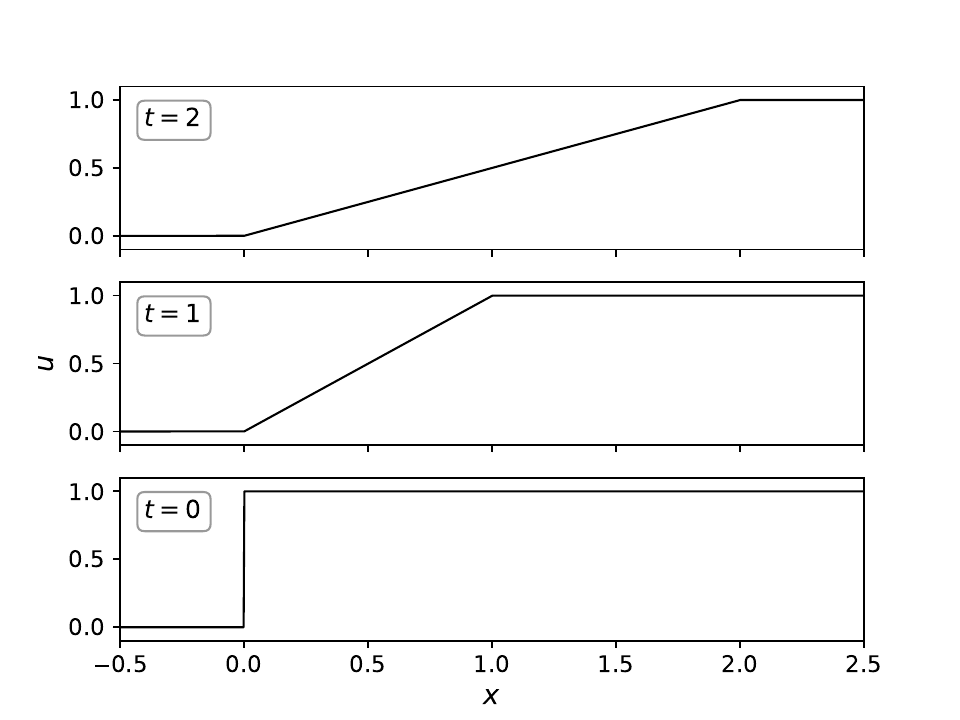}
            \includegraphics[width=0.32\textwidth]{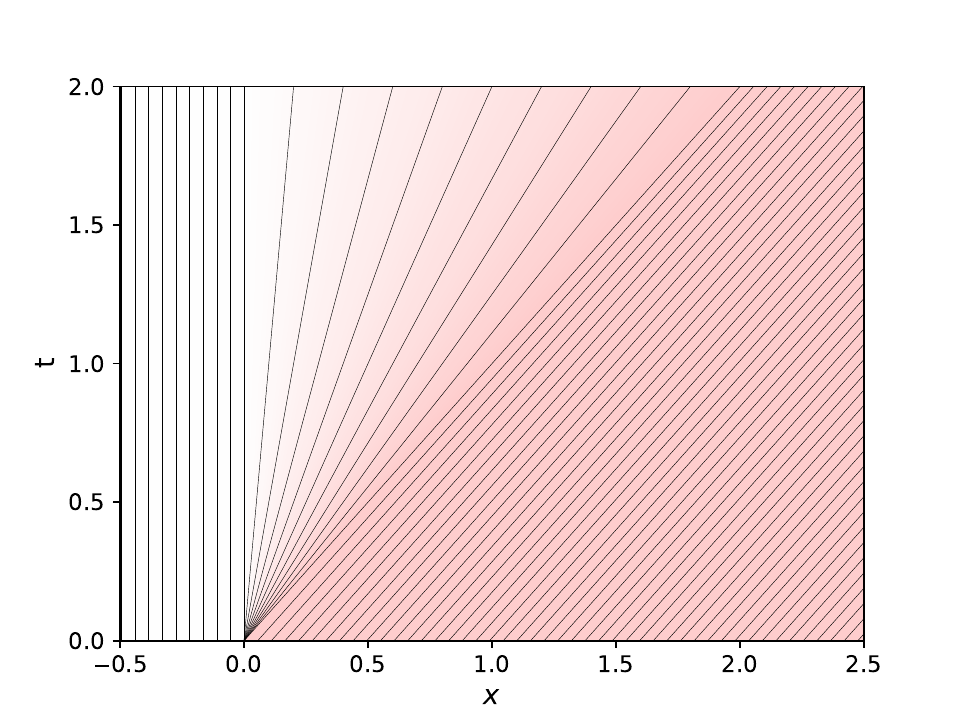}
            \includegraphics[width=0.32\textwidth]{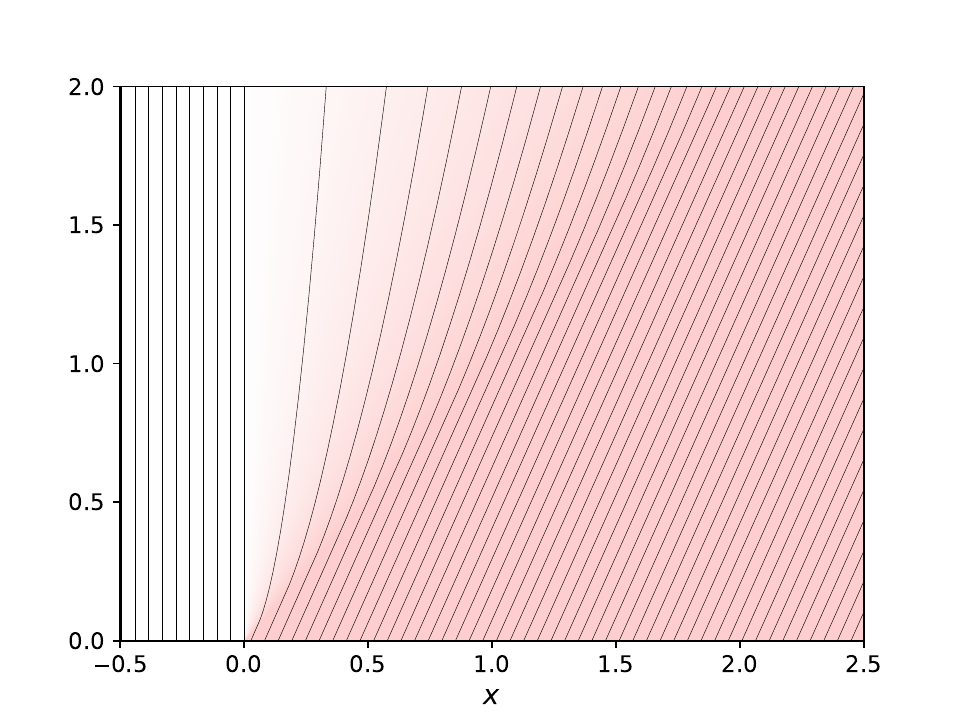}
        \caption{Entropy solution at $t = 0, 1, 2$ (left) for Burgers' equation with Heaviside initial data, characteristics (middle) and particle paths (right). The figures are shaded in red by the magnitude of $u$.}
        \label{fig:burgers_ex}
    \end{figure}
\end{example}

In fact, Oleinik's entropy condition and the factor $\hf$ on the right-hand side of \eqref{eq:burgers_flow_0} completely captures the uniqueness mechanism for all $u_0 \in L^\infty(\R)$ in this case, as demonstrated by the following argument.

\begin{proof}(Proof of Theorem \ref{thm:burgers})
 Let $u$ be the entropy solution of Burgers' equation with initial data $u_0 \in L^\infty(\R)$. Then $u \in L^\infty(\R \times \R_+)$, and existence of a solution to \eqref{eq:flow} is guaranteed by Filippov's existence theorem \cite[Theorem 4]{Filippov60}. Assume that $x_t$ and $y_t$ are two Filippov solutions of \eqref{eq:flow} with initia data $x_0$ and $y_0$. Oleinik's entropy condition \eqref{eq:oleinikEC} yields
    \begin{equation} \label{eq:oleinik}
        \partial_z a_c(z, t) = \partial_z \frac{u(z, t) + c}{2} \leq \frac{1}{2t}
    \end{equation}
    in the sense of distributions for all $t > 0$. Thus, $a_c(\cdot, t)$ is one-sided Lipschitz for all $t > 0$, and assuming that, say, $x_0 \leq y_0$, we may assume $x_t \leq y_t$ without loss of generality. In view of \eqref{eq:oleinik} we get that $a_c(z+,t)\leq a_c(z-,t)$ for all $z$, where the existence of right- and left-limits is justified since $u(\cdot, t) \in \BV_\loc(\R)$ for all $t > 0$. Thus, $x_t$ and $y_t$ satisfy
    \begin{equation*}
         a_c(x_t+, t)\leq \dot{x}_t\leq a_c(x_t-, t), \qquad a_c(y_t+, t)\leq \dot{y}_t \leq a_c(y_t-, t)
    \end{equation*}
    for a.e.~$t > 0$. The inequality \eqref{eq:oleinik} also implies that
    \begin{equation*}
        a_c(y_t-, t) - a_c(x_t+, t) \leq \frac{1}{2t}(y_t - x_t).
    \end{equation*}
    Combining the above, we obtain
    \begin{equation*}
        \frac{d}{dt} (y_t - x_t) \leq a_c(y_t-, t) - a_c(x_t+, t) \leq \frac{1}{2t}(y_t - x_t)
    \end{equation*}
    for a.e.\ $t > 0$. Moreover, boundedness of $u$ implies finite speed of propagation
    \begin{equation*}
        y_t -x_t \leq (y_0 - x_0) + \norm{u_0}_{L^\infty} t,
    \end{equation*}
    and so Grönwall's inequality yields
    \begin{equation*}
        y_t - x_t \leq \sqrt{\frac{t}{t_0}} \bigl(y_{t_0} - x_{t_0}\bigr) \leq \sqrt{\frac{t}{t_0}} \bigl((y_0 - x_0) + \norm{u_0}_{L^\infty} t_0\bigr)
    \end{equation*}
    for all $t > t_0$, where $t_0 > 0$ is an arbitrary parameter. Choosing ${t_0 = (y_0 - x_0)/\norm{u_0}_{L^\infty}}$ in the inequalities above, we arrive at the upper bound
    \begin{equation}\label{eq:burgersHolderEstimate}
        \abs{y_t - x_t} \leq \abs{y_0 - x_0} + 2 \sqrt{\norm{u_0}_{L^\infty} \abs{y_0 - x_0} t}
    \end{equation}
    for all $t > 0$. In particular, if $x_0 = y_0$ then $x_t = y_t$ for all $t\in\R_+$.
\end{proof}

\begin{remark}
    The proof of Theorem \ref{thm:burgers} also applies to entropy solutions of \eqref{eq:cl} with monomial flux functions $f(u) = \abs{u}^p$ for $p>1$ and $c = 0$ (but unfortunately not for $c\neq0$). Indeed, Hoff's Oleinik-type estimate $\partial_x f'(u(x, t)) \leq \frac{1}{t}$ for entropy solutions with convex flux functions \cite{hoff_1983} implies
    \begin{equation*}
        \partial_x a_0(x, t) = \partial_x \frac{1}{p} f'(u(x, t)) \leq \frac{1}{pt},
    \end{equation*}
    and consequently
    \begin{equation*}
        \frac{d}{dt} (y_t - x_t) \leq \frac{1}{pt} (y_t - x_t)
    \end{equation*}
    for particle paths $x_t \leq y_t$. Since $p > 1$ the same approach applies.
\end{remark}

\section{The Riemann problem}\label{sec:riemann}
This section is devoted to the proof of Theorem \ref{thm:riemann}. As is often the case for results on solutions to Riemann problems, the proof is a tedious, but elementary, case-by-case analysis.

\begin{proof}(Proof of Theorem \ref{thm:riemann})
If $u_L=u_R$ the result follows trivially and so we consider the case when $u_L< u_R$; we skip the case when $u_L>u_R$ since a change of variables would move this over to the considered setting. Let $f_{\smallsmile}$ be the lower convex envelope of $f$ on $[u_L,u_R]$, that is, for $v\in[u_L,u_R]$ we define
\begin{align}\label{eq: definitionOfConvexEnvelope}
    f_\smallsmile(v) = \max\bigl\{h(v): h \text{ convex and } h\leq f \text{ on }[u_L,u_R]  \bigr\}.
\end{align}
It is straightforward to show that $f_\smallsmile$ is itself convex. Since $f_{\smallsmile}'\coloneqq \frac{d}{du}f_{\smallsmile}$ is continuous (as $f$ is $C^1$) and nondecreasing it admits right-inverses: Let $g_L$ and $g_R$ denote the left- and right-continuous functions
\begin{align*}
    g_L(y)=\begin{cases}
        u_L, \hspace{53pt}y<y_L,\\
        \min (f_{\smallsmile}')^{-1}(y) ,\,\, y\in[y_L, y_R],\\
        u_R,\hspace{53pt} y>y_R,
    \end{cases} \quad
    g_R(y)=\begin{cases}
        u_L,\hspace{53pt} y<y_L,\\
        \max (f_{\smallsmile}')^{-1}(y) ,\,\, y\in[y_L, y_R],\\
        u_R, \hspace{53pt}y>y_R,
    \end{cases}
\end{align*}
where $y_L\coloneqq f_{\smallsmile}'(u_L)$ and $y_R\coloneqq f_{\smallsmile}'(u_R)$ and where the set $(f_{\smallsmile}')^{-1}(y)$ is the preimage of $f_{\smallsmile}'$ at $y$. We now point out two facts: First, the entropy solution of the Riemann problem is characterized by
    \begin{align*}
        u(x-,t) =g_L\Big(\frac{x}{t}\Big), \qquad
         u(x+,t) =g_R\Big(\frac{x}{t}\Big).
    \end{align*}
Second, we have the two identities
\begin{align}\label{eq: identitiesForInverses}
   f\circ g_L=  f_{\smallsmile}\circ g_L , \qquad  f\circ g_R=f_{\smallsmile}\circ g_R,
\end{align}
whose proof we skip for the sake of brevity; it mostly boils down to definitions.

Next, with $V$ as defined in \eqref{eq: velocityForRiemannParticle} and $A$ as in \eqref{eq:velocityfield} we aim to show that
\begin{align}\label{eq: RiemannVelocitySqueeze}
      A(g_R(V), c) \leq  V \leq A(g_L(V), c).
\end{align}
Using the above expressions for $u$, the previous two inequalities are equivalent to the bounds $a_c(x_t+,t)\leq \dot{x}_t\leq a_c(x_t -, t)$ where $a_c=A(u,c)$ and where $x_t = Vt$. In particular, the theorem would follow as $x_t$ is then a Filippov solution of \eqref{eq:flow}. We break the remainder of the proof up into three cases.

\textit{The case when $c< u_L < u_R$.} Since $V\coloneqq \min_{v\in[u_L,u_R]}A(v,c)$, the second inequality in \eqref{eq: RiemannVelocitySqueeze} holds trivially, and so we turn to the first inequality. For $v\in [u_L,u_R]$ consider the affine function
\begin{equation*}
    h(v)\coloneqq V(v-c) + f(c)\leq A(v,c)(v-c)+f(c)=f(v).
\end{equation*}
Since $h$ in particular is convex, we see that $h\leq f_\smallsmile\leq f$ on $[u_L,u_R]$. Pick $\overline{u}\in[u_L,u_R]$ such that $V=A(\overline{u},c)$ and note that $h(\overline{u})=f(\overline{u})$. Thus, $h\leq f_\smallsmile$ on $[u_L,u_R]$ with $h(\overline{u})=f_\smallsmile(\overline{u})$, and so we infer that
\begin{align*}
\begin{cases}
    V\leq f_\smallsmile'(\overline{u}) & \text{if }\overline{u}=u_L,\\
    V=f_\smallsmile'(\overline{u}) & \text{if }\overline{u}\in(u_L,u_R),\\
    V\geq f_\smallsmile'(\overline{u}) & \text{if }\overline{u}=u_R,
\end{cases}
\end{align*}
where we used $h'=V$. The above relations tells us that if $ f_\smallsmile'(\overline{u})\neq V$, then either $\overline{u}=u_L$ and $V<y_L\coloneqq f_\smallsmile'(u_L)$, or $\overline{u}=u_R$ and $V>y_R\coloneqq f_\smallsmile'(u_R)$; in either case it is clear from the definition of $g_R$ that $g_R(V)=\overline{u}$ which implies the first inequality in \eqref{eq: RiemannVelocitySqueeze}. Thus, it remains to cover the case $f_\smallsmile'(\overline{u})= V $: Again, by the definition of $g_R$, and the fact that $f_\smallsmile'$ is nondecreasing, we infer $\overline{u}\leq g_R(V)$ and $f_\smallsmile'(v)=V$ for all $v\in [\overline{u},g_R(V)]$. Combining this with the identity $f\circ g_R=f_\smallsmile\circ g_R $ \eqref{eq: identitiesForInverses} and the properties of $\overline{u}$ we compute
\begin{align*}
   \big(g_R(V)-c\big) A(g_R(V),c) =&\, f(g_R(V))-f(c)\\
   =&\,\big(f_\smallsmile(g_R(V))-f_\smallsmile(\overline{u})\big) + \big(f(\overline{u})-f(c)\big)\\
   =&\, (g_R(V)-\overline{u})V + (\overline{u}-c)V = (g_R(V)-c)V.
\end{align*}
Since $c\neq g_R(V)$, we obtain the first inequality in \eqref{eq: RiemannVelocitySqueeze} as an equality.

\textit{The case when $u_L < u_R < c$.}
This case can be treated by an analogous argument or, alternatively, by a change of variables $x\mapsto-x$, $u\mapsto -u$, $c \mapsto -c$ and $f\mapsto f(-\cdot)$ which maps this case into the previous one.

\textit{The case when $u_L \leq c \leq u_R$.}
Since $V \coloneqq f_{\smallsmile}'(c)$ and $c\in[u_L,u_R]$, the definitions of $g_L$, $g_R$ give both that $g_L(V)\leq c\leq g_R(V)$ and that $f_\smallsmile' = V$ on $[g_L(V),g_R(V)]$.
We begin by demonstrating the first inequality in \eqref{eq: RiemannVelocitySqueeze}, which requires us to consider three sub-cases. Suppose first that $c<g_R(V)$:
Combining $f( g_R(V))=f_\smallsmile( g_R(V))$ (due to \eqref{eq: identitiesForInverses}) with $f_\smallsmile(c)\leq f(c)$ gives
\begin{align*}
    A(g_R(V),c) \leq \frac{f_{\smallsmile}(g_R(V))-f_\smallsmile(c)}{g_R(V)-c} = V,
\end{align*}
where the equality follows from the fact that $f_\smallsmile'=V$ on $[c,g_R(V)]$. Next, suppose $c=g_R(V)<u_R$: As $g_R$ is right-continuous and strictly increasing on $[V,y_R]$ we infer from the identity $f\circ g_R=f_\smallsmile\circ g_R$ that
\begin{align*}
    f'(c)=\lim_{h\downarrow0}\frac{f(g_R(V+h)) - f(c)}{g_R(V+h)-c}=\lim_{h\downarrow0}\frac{f_{\smallsmile}(g_R(V+h)) - f_{\smallsmile}(c)}{g_R(V+h)-c}=f_\smallsmile'(c)
\end{align*}
or, equivalently, $A(g_R(V),c)=V$. Finally, suppose $u_L<c=g_R(V)$: Since $f_\smallsmile\leq f$ on $[u_L,c]$ and $f_\smallsmile(c)=f(c)$ we conclude that
\begin{align*}
    f'(c)\leq f_{\smallsmile}'(c) \quad \Longleftrightarrow \quad A(g_R(V),c)\leq V.
\end{align*}
From these three sub-cases we conclude that the first inequality in \eqref{eq: RiemannVelocitySqueeze} holds true (the case $u_L=c=u_R$ is excluded by the assumption $u_L<u_R$). And by considering three similar sub-cases, the same can be said about the second inequality.
\end{proof}

\section{Well-posedness of particle paths}\label{sec:flowWellPosed}

In this section we prove that if $u$ is the entropy solution of \eqref{eq:cl} then the ODE \eqref{eq:flow} is well-posed in the Filippov sense (see Section \ref{sec:filippov}) for all initial data $(x,s)\in \R\times \R_+$ and all $c\in\R$. By Theorem \ref{thm: preliminariesOnFilippovFlow} there always exists a Filippov solution, so we need only prove the uniqueness of Filippov solutions to \eqref{eq:flow}; this is done in Section \ref{sec:uniquenessOfFlow}. Theorem \ref{thm: preliminariesOnFilippovFlow} further tells us that the well-posedness of \eqref{eq:flow} gives, for each $c\in\R$, rise to a unique and continuous Filippov flow $X^c_t(x,s)$ (see Definition \ref{def:filippovflow}). The regularity of this mapping is the subject of Section \ref{sec:flowRegularity}.

\subsection{Uniqueness of the flow}\label{sec:uniquenessOfFlow}
In this section we prove that the ODE \eqref{eq:flow} has at most one Filippov solution when $u$ is the entropy solution of \eqref{eq:cl}. It should be mentioned that the computations in this subsection can be carried out in a more general framework than that of Theorem \ref{thm:main}: For fixed $c\in \R$ the well-posedness of \eqref{eq:flow} requires only that
\begin{enumerate}[label=(\roman*)]
\item $f\in C(\R)$ is differentiable at $c$,
\item $u\in L^\infty(\R\times\R_+)$ admits left- and right-limits in space for a.e. $t>0$, and
\item $c$ is an accumulation point, from left and right, in the set of $k\in \R$ such that $u$ satisfies \eqref{eq:entropycondition} with entropy $\eta_k(u)=|u-k|$.
\end{enumerate}
In particular, this proves the implication $\implies$ from Remark \ref{rem:main_thm_generalization}. Condition (iii), which is stronger than the left box in Remark \ref{rem:main_thm_generalization}, is needed at the very end of the following proposition; its importance is discussed in Remark \ref{rem: theCorrectKIsTheOneCuttingThroughTheShock}.

\begin{proposition}[Velocity compression]\label{prop: velocityControl} Let $f\in C^1(\R)$ and $u$ be the entropy solution of \eqref{eq:cl} with $u_0\in L^\infty\cap \BV_{\loc}(\R)$. Fix $c\in\R$ and let further $x_t$ be a {Filippov solution of \eqref{eq:flow}}. Then
\begin{align*}
    a_c({x_t+},t)\leq \dot{x}_t\leq a_c({x_t-},t),
\end{align*}
for a.e.~$t>s$.
\end{proposition}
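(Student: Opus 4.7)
The plan is to use the identity $\eta_k(u)a_k = q_k(u)$ from equation \eqref{eq: algebraicIdentitySatisfiedByTheKruzkovEntropies} to recast the Kruzhkov entropy condition as the distributional continuity-type inequality
\begin{equation*}
\partial_t |u - k| + \partial_x\bigl(a_k|u-k|\bigr) \leq 0 \qquad \text{for all admissible } k,
\end{equation*}
and to combine this with the Filippov inclusion to extract the stated compression. The intuition is that $|u-k|$ is a non-negative ``mass'' transported by velocity $a_k$, and the inequality precludes splitting of this mass at the particle path. First, the $\BV_\loc$ hypothesis on $u(\cdot,t)$ yields well-defined one-sided limits at every point for a.e.~$t$, so the Filippov inclusion places $\dot{x}_t$ in the closed interval with endpoints $a_c(x_t\pm, t)$. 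When $u(\cdot, t)$ is continuous at $x_t$ the conclusion is immediate. For the remaining times, I would use that the jump set of $u$ in space-time is covered by countably many Lipschitz shock curves, and that on each such curve $\gamma$ the set of times where $x_t = \gamma(t)$ while $\dot{x}_t \neq \dot\gamma(t)$ is discrete (hence null). Modulo a null set we may therefore assume $\dot{x}_t = s \coloneqq A(u^-, u^+)$, the Rankine--Hugoniot speed, with $u^\pm \coloneqq u(x_t\pm, t)$.

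On such an interval, the Kruzhkov entropy inequality across the shock specializes (via equation \eqref{eq: algebraicIdentitySatisfiedByTheKruzkovEntropies} again) to
\begin{equation*}
|u^+ - k|\bigl(s - A(u^+, k)\bigr) \geq |u^- - k|\bigl(s - A(u^-, k)\bigr) \qquad \text{for all admissible } k.
\end{equation*}
Setting $k = c$ and combining with the Filippov inclusion $s = \dot{x}_t \in [\min(a_c(x_t-, t), a_c(x_t+, t)), \max(a_c(x_t-, t), a_c(x_t+, t))]$, the ``wrong-direction'' scenario $a_c(x_t+, t) > a_c(x_t-, t)$ forces the left-hand side non-positive and the right-hand side non-negative, so both must vanish. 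This compels $u^+ = c$ or $u^- = c$ (not both, lest there be no jump at all). In the non-degenerate case $u^\pm \neq c$ the wrong direction is impossible and the proposition follows.

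The main obstacle---and the reason condition (iii) from the discussion preceding this proposition is required---is the degenerate case. Say $u^+ = c$ and $u^- \neq c$; the symmetric case is analogous. Then Rankine--Hugoniot gives $s = A(u^-, c) = a_c(x_t-, t)$ and $a_c(x_t+, t) = f'(c)$, so the wrong-direction assumption becomes $a_c(x_t-, t) < f'(c)$. Applying the entropy inequality at $k = c \pm \eps$ (admissible for small $\eps$ of appropriate sign by condition (iii)) and expanding to first order using the $C^1$ regularity of $f$ and the derivative $\partial_k A(u, k)\big|_{k=c} = \bigl(A(u, c) - f'(c)\bigr)/(u - c)$, the leading order reproduces the collapsed identity $0 \geq 0$, while the $O(\eps)$ correction yields, after cancellation, $a_c(x_t-, t) \geq f'(c)$, contradicting the wrong-direction assumption. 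Depending on the sign of $u^- - c$, either $\eps > 0$ or $\eps < 0$ delivers the useful constraint, and condition (iii) guarantees availability of the required sign. This closes the argument.
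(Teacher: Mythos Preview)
Your argument and the paper's share the same endgame: both reduce to the inequality
\[
0 \leq |u^+ - k|\bigl(\dot{x}_t - a_k^+\bigr) + |u^- - k|\bigl(a_k^- - \dot{x}_t\bigr)
\]
along the path, set $k=c$ to force $u^+=c$ or $u^-=c$ under the wrong-direction hypothesis, and then run the first-order expansion in $k$ near $c$ (using $\partial_k A(u,k)|_{k=c}=(A(u,c)-f'(c))/(u-c)$) to obtain a contradiction. Your perturbation step and identification of the relevant sign of $\eps$ match the paper exactly.

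Where you diverge is in how you obtain the displayed inequality. You invoke a structure theorem---the space-time jump set is covered by countably many Lipschitz shock curves---and argue that $\dot{x}_t$ coincides with the Rankine--Hugoniot speed $s=A(u^-,u^+)$ a.e.\ on the set of jump times, so that the classical jump entropy condition can be imported. Two comments. First, the ``discrete (hence null)'' claim is not quite right: for two Lipschitz functions the coincidence set where derivatives differ is merely null, not discrete. Second, and more importantly, this detour through BV structure theory (rectifiability of the jump set, slicing, identification of the approximate jump set with slicewise jumps) is heavy machinery that is not set up in the paper and is not needed. The paper instead tests the Kruzhkov inequality directly with a hat function $\Lambda_\eps$ centred at $x_t$, uses the identity $\partial_t\Lambda_\eps = -\dot{x}_t\,\partial_x\Lambda_\eps$, and passes $\eps\downarrow 0$ to obtain the inequality above with $\dot{x}_t$ itself---no Rankine--Hugoniot, no shock curves. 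This is shorter, self-contained, and works under the minimal hypotheses stated (only one-sided spatial limits of $u(\cdot,t)$ are used). Your route can be made rigorous, but it buys nothing over the paper's direct argument.
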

\begin{proof}
Without loss of generality we set $s=0$, as we could otherwise make a change of variables. Let $0\leq \Lambda\in C^\infty_c(\R)$ be a smooth hat function, that is, let $\Lambda$ be symmetric about zero, nondecreasing on $(-\infty,0]$ and satisfy $\Lambda(0)=1$. For $\eps>0$ define
\begin{equation*}
    \Lambda_\eps(z,t) = \Lambda\Big(\frac{z-x_t}{\eps}\Big),
\end{equation*}
and observe that we for all $t\geq 0$ and $v\in \BV_{\loc}(\R)$ have
\begin{align}\label{eq: lambdaConvergesToDifferenceOfLeftAnd>RightLimit}
    \lim_{\eps\downarrow 0}\int_{\R} \partial_z\Lambda_\eps(z,t)v(z) \,dz = v({x_t-})-v({x_t+}).
\end{align}
For any $0\leq \theta\in C_c^\infty((0,\infty))$ the function $(z,t)\mapsto \Lambda_\eps(z,t)\theta(t)$ is a nonnegative element of $W^{1,1}(\R\times\R_+)$ and is thus an admissible test function for the entropy inequalities satisfied by $u$. In particular, using Kruzkhov's entropies \eqref{eq:kruzkoventropy} we get
\begin{equation} \label{eq: generalEntropyInequalityWithLambdaFunction}
    \begin{split}
    0&\leq \int_{\R_+}\int_\R\eta_k(u)\partial_t(\Lambda_\eps\theta) + q_k(u)\partial_z(\Lambda_\eps\theta) \,dz \,dt\\
    &=\int_{\R_+}\int_\R\eta_k(u)\Lambda_\eps \partial_t\theta\,dz \,dt\\
   &\relspace +  \int_{\R_+}\int_\R \Big(q_k(u)-\eta_k(u)\dot{x}_t\Big)\partial_z\Lambda_\eps\theta \,dz \, dt,
    \end{split}
\end{equation}
for all $k\in\R$, where we exploited the identity $\partial_t \Lambda_\eps=-\dot{x}_t\partial_z \Lambda_\eps$ which holds a.e.\ in $\R\times\R_+$.
Clearly the first integral on the right-hand side of \eqref{eq: generalEntropyInequalityWithLambdaFunction} vanishes as $\eps\downarrow 0$ whereas the limit of the second is implied by \eqref{eq: lambdaConvergesToDifferenceOfLeftAnd>RightLimit} and dominated convergence. Letting $\eps$ tend to zero in \eqref{eq: generalEntropyInequalityWithLambdaFunction} we thus obtain
\begin{align*}
     0\leq&\,   \int_{\R_+}\Big(\big\llbracket \eta_k(u)\big\rrbracket (t)\dot{x}_t- \big\llbracket q_k(u) \big\rrbracket(t) \Big)\theta(t) \,dt,
\end{align*}
where we introduced the notation $\llbracket g\rrbracket(t) \coloneqq g({x_t+},t)-g({x_t-},t)$. By generality of $\theta$ we further conclude for any (fixed) $k\in\R$ that
\begin{align}\label{eq: entropyInequalityAlongAPath}
    0\leq \big\llbracket \eta_k(u)\big\rrbracket \dot{x}_t- \big\llbracket q_k(u) \big\rrbracket,
\end{align}
for a.e.\ $t\geq 0$. Let $E_k$ denote the null set of points $t\geq 0$ where \eqref{eq: entropyInequalityAlongAPath} fails. Wishing to allow $k$ in \eqref{eq: entropyInequalityAlongAPath} to vary for a fixed $t$, we introduce the null set $E=E_c\cup(\cup_{k\in \Q}E_k)$ and get that \eqref{eq: entropyInequalityAlongAPath} holds for all $(k,t)\in (\Q\cup \{c\})\times(\R_+\setminus E)$.

Exploiting the identities $\eta_k(u)=|u-k|$ and $q_k(u)=|u-k|a_k$ in \eqref{eq: entropyInequalityAlongAPath}, and using the notation $u^{\pm}(t)=u({x_t\pm},t)$ and $a_k^{\pm}(t)=a_k({x_t\pm},t)$, we obtain
\begin{align}\label{eq: rewritingEntropyInequalityAlongAPath}
    0\leq  \bigl|u^+(t)-k\bigr|\big(\dot{x}_t - a_k^+(t)\big) + \big|u^-(t)-k\big|\big(a_k^-(t) -\dot{x}_t\big)
\end{align}
for all $(k,t)\in (\Q\cup \{c\})\times(\R_+\setminus E)$. As $x_t$ is a Filippov solution of the ODE induced by $a_c$, it follows that $\dot{x}_t\in [a_c^-(t)\wedge a_c^+(t),a_c^-(t)\vee a_c^+(t)]$ almost everywhere; by possibly removing another null set, we may assume this inclusion to hold for all $t\in \R_+\setminus E$. The proposition is then proved if we can show that $a^+_c(t)\leq a^-_c(t)$ in $\R_+\setminus E$. This is where the regularity of $f$ comes in; we will need its derivative at $c$.

Assume for the sake of contradiction that there is some $t\in \R_+\setminus E$ such that $a^-_c(t)<a^+_c(t)$. Observe then that we can not have $u^+(t)=u^-(t)$ as this would force $a_c^+(t)=a_c^-(t)$. Setting $k=c$ in \eqref{eq: rewritingEntropyInequalityAlongAPath} we conclude that either $u^+(t)=c$ and $\dot{x}_t=a^-_c(t)$, or $u^-(t)=c$ and $\dot{x}_t=a^+_c(t)$. We proceed with the former alternative; the latter is dealt with similarly and so we skip it. For brevity, we shall omit the (fixed) argument $t$ from here on. Recall the definition \eqref{eq:velocityfield} of $A$ and observe that $k\mapsto A(v,k)$ is continuous at $k=v$ and differentiable for $k\neq v$.  From $u^+=c\neq u^-$, we conclude that the two functions $k\mapsto a^+_k =A(u^+,k)$ and $k\mapsto a_k^-=A(u^-,k)$ are respectively continuous and differentiable at $k=c$. Exploiting this, and the identities $u^+=c$ and $\dot{x}_t =a_c^-$, the two terms on the right-hand side of \eqref{eq: rewritingEntropyInequalityAlongAPath}  can be written
\begin{align*}
     \big|u^+-k\big|\big(\dot{x}_t - a_k^+\big) &=|c-k|\big(a_c^-- a_c^+\big) + o(c-k),\\
    \big|u^--k\big|\big(a_k^- -\dot{x}_t\big) &= \big|u^--c\big|\tfrac{\partial A}{\partial k}(u^-,c)(k-c) + o(c-k)
\end{align*}
as $k\to c$. Substituting this into \eqref{eq: rewritingEntropyInequalityAlongAPath} yields
\begin{align}\label{eq: concludingTheVelocityControlProposition}
    0\leq |c-k|\Big(a_c^-- a_c^+\  + |u^--c|\tfrac{\partial A}{\partial k}(u^-,c)\sgn(k-c)\Big) + o(c-k),
\end{align}
for all $k\in \Q\cup\{c\}$. The parentheses is bounded above by $a_c^--a_c^+<0$ when we restrict either $k>c$ or $k<c$, depending on the sign of $\tfrac{\partial A}{\partial k}(u^-,c)$. Thus, letting $k\in \Q$ tend to $c$ (from the appropriate side of $c$) we eventually obtain a strictly negative right-hand side in \eqref{eq: concludingTheVelocityControlProposition}. By contradiction, we conclude that $a_c^+(t)\leq a_c^-(t)$ for all $t\in\R_+\setminus E$ and so we are done.
\end{proof}
\begin{remark}\label{rem: theCorrectKIsTheOneCuttingThroughTheShock}
    Exploiting the identity $\tfrac{\partial A}{\partial k}(u,k)= (A(u,k)-f'(k))/(u-k)$ at the very end of the previous proof, one finds that ``the appropriate side of $c$'' is the one such that $k$ intersects the shock connecting $u^-$ and $u^+$. Informally, this means that Proposition \ref{prop: velocityControl} (and the well-posedness of \eqref{eq:flow}) follows as long as $c$ only intersects entropic shocks in $u$. This condition is similar, though stronger, to requiring that $u$ satisfies \eqref{eq:entropycondition} with entropy $\eta_c(u)=|u-c|$, as the latter allows $c$ to intersect the end-points of nonentropic shocks in $u$. To exemplify, consider the weak solution $u(x,t) = 1+ \sgn(x-t)$ of Burgers' equation. Then $u$ satisfies \eqref{eq:entropycondition} with entropy $\eta_0(u)=|u|$, but the ODE \eqref{eq:flow} is \textit{not} well-posed for $c=0$ because $x_t=0$ and $x_t=t$ are both solutions starting at zero.
\end{remark}

Proposition \ref{prop: velocityControl} allows us to compute explicit bounds on entropies of $u$ in between particles. This is Lemma \ref{lemma: explicitLocalEntropyControl}, of which a special case is the following result, that can be interpreted as saying that the total mass between two particles can not increase over time.

\begin{proposition}[Local entropy dissipation]\label{prop: localMassDecay}
Let $f\in C^1(\R)$ and $u$ be the entropy solution of \eqref{eq:cl} with $u_0\in L^\infty\cap \BV_{\loc}(\R)$. Fix $c\in\R$ and let further $x_t\leq y_t$ be Filippov solutions of \eqref{eq:flow}. Then the function
\begin{align*}
    t\mapsto \int_{x_t}^{y_t}|u(z,t)-c|\,dz
\end{align*}
is nonincreasing.
\end{proposition}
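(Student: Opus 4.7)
The plan is to differentiate $I(t) \coloneqq \int_{x_t}^{y_t}\eta_c(u(z,t))\,dz$ distributionally and show $\partial_t I \leq 0$. This will be combined with continuity of $I$ in $t$---which follows from Lipschitz continuity of $x_t,y_t$ (as $a_c$ is bounded) together with the standard $L^1_\loc$ continuity of $t\mapsto u(\cdot,t)$ for entropy solutions---to upgrade to pointwise monotonicity.

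Concretely, I would test the Kruzkhov entropy inequality \eqref{eq:entropycondition}--\eqref{eq:kruzkoventropy} with $k=c$ against $\phi(z,t)=\bigl[H^R_\eps(z-x_t)-H^L_\eps(z-y_t)\bigr]\theta(t)$, where $0\leq\theta\in C_c^\infty((0,\infty))$ is arbitrary, $H^R_\eps$ is a smooth nondecreasing approximation to the Heaviside function with $(H^R_\eps)'$ supported in $[0,\eps]$, and $H^L_\eps$ is an analogous approximation with $(H^L_\eps)'$ supported in $[-\eps,0]$. For $\eps<\tfrac{1}{2}(y_t-x_t)$, $\phi$ is a nonnegative smooth approximant of $\mathbf{1}_{(x_t,y_t)}\theta(t)$. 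Expanding $\partial_t\phi$ and $\partial_z\phi$ and using the identity $q_c(u)=\eta_c(u)a_c$ from \eqref{eq: algebraicIdentitySatisfiedByTheKruzkovEntropies}, the entropy inequality rearranges to a bulk term $\iint \eta_c(u)[H^R_\eps(z-x_t)-H^L_\eps(z-y_t)]\theta'(t)\,dz\,dt$ plus two boundary-type integrals: $\iint\eta_c(u)[a_c-\dot{x}_t](H^R_\eps)'(z-x_t)\theta(t)\,dz\,dt$ along $x_t$ and its counterpart along $y_t$ with sign reversed.

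Sending $\eps\downarrow 0$, the asymmetric supports of $(H^R_\eps)'$ and $(H^L_\eps)'$ make the corresponding delta-like bumps select the right-limit at $x_t$ and the left-limit at $y_t$ respectively (these one-sided traces exist pointwise since $u(\cdot,t)\in\BV_\loc$), while the bulk term converges to $\int I(t)\theta'(t)\,dt$. Applying Proposition \ref{prop: velocityControl}, the velocity compression bounds $a_c(x_t+,t)-\dot{x}_t\leq 0$ and $a_c(y_t-,t)-\dot{y}_t\geq 0$ combined with $\eta_c\geq 0$ show that both boundary contributions in the limit are nonpositive. Hence $\int I(t)\theta'(t)\,dt\geq 0$ for every $\theta\geq 0$, which is precisely $\partial_t I\leq 0$ in the distributional sense, and the continuity of $I$ finishes the argument.

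The main technical point is the matching of the cutoffs' support sides to the specific one-sided traces $a_c(x_t+,t)$ and $a_c(y_t-,t)$ appearing in the velocity compression bound: a symmetric choice of approximation would instead produce half-sums of left and right traces, which in general cannot be controlled by Proposition \ref{prop: velocityControl}. Beyond this design choice, the proof is essentially a direct computation from the entropy inequality combined with the velocity bounds, and it runs in parallel to the derivation of \eqref{eq: entropyInequalityAlongAPath} in the proof of Proposition \ref{prop: velocityControl}---only now we test against a strip rather than a single path.
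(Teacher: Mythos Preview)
Your proposal is correct and follows essentially the same route as the paper: the paper packages the computation into a general lemma (Lemma~\ref{lemma: explicitLocalEntropyControl}) valid for any entropy pair, proved by testing \eqref{eq:entropycondition} against $\chi_\eps^x(z,t)\chi_\eps^y(z,t)\theta(t)$ with one-sided cutoffs $\chi_\eps^x=\chi((z-x_t)/\eps)$ and $\chi_\eps^y=\chi((y_t-z)/\eps)$, and then specializes to $(\eta_c,q_c)$ using \eqref{eq: algebraicIdentitySatisfiedByTheKruzkovEntropies}. One small technical point: the paper's \emph{product} of cutoffs is automatically nonnegative even at times where $x_t=y_t$, whereas your \emph{difference} $H^R_\eps(z-x_t)-H^L_\eps(z-y_t)$ can become negative there (e.g.\ at $z=x_t=y_t$ it equals $-1$), so your nonnegativity claim needs $x_t<y_t$ throughout $\supp\theta$; using the product form removes this caveat.
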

\begin{proof}
    This follows directly from Lemma \ref{lemma: explicitLocalEntropyControl} applied to the Kruzkhov entropy pair \eqref{eq:kruzkoventropy} with $k=c$ because of the identity \eqref{eq: algebraicIdentitySatisfiedByTheKruzkovEntropies}.
\end{proof}
Before stating Lemma \ref{lemma: explicitLocalEntropyControl}, we first point out that the two propositions presented so far suffice to prove the first part of Theorem \ref{thm:main}.

\begin{proof}(Proof of (i) $\Rightarrow$ (ii) in Theorem \ref{thm:main})
    As explained in the introduction of the section, we need only prove the uniqueness of paths. Fix $c\in \R$ and let $x_t$ and $y_t$ be Filippov solutions of the ODE \eqref{eq:flow} such that $x_s=y_s$. Since $\tilde{x}_t \coloneqq x_t\wedge y_t$ and $\tilde{y}_t \coloneqq x_t\vee y_t$ are also particle paths, we may assume without loss of generality that $x_t\leq y_t$ for all $t\geq s$. Let $D$ denote the set of times $t>s$ such that $x_t<y_t$. It follows from Proposition \ref{prop: localMassDecay} that $u=c$ almost everywhere between  $x_t$ and $y_t$ for every $t\in D$. By Proposition \ref{prop: velocityControl} we then infer
    \begin{align*}
        \dot{y}_t-\dot{x}_t\leq a_c({y_t-},t)-a_c({x_t+},t)=f'(c)-f'(c)=0,
    \end{align*}
   for a.e.\ $t\in D$. Since $x_t=y_t$ for $t\notin D$, in particular at $t=s$, we conclude that $x_t = y_t$ for all $t\geq s$.
\end{proof}
We end the subsection with Lemma \ref{lemma: explicitLocalEntropyControl}, whose generality will be important when we in the next subsection examine the spatial regularity of the flow of \eqref{eq:flow}.

\begin{lemma}[Bound on local entropy growth]\label{lemma: explicitLocalEntropyControl}
Let $f\in C^1$ and $u$ be the entropy solution of \eqref{eq:cl} with $u_0\in L^\infty\cap \BV_{\loc}(\R)$. Fix $c\in \R$ and let further $x_t\leq y_t$ be Filippov solutions of \eqref{eq:flow} with initial time $s\geq0$. For any entropy pair $(\eta,q)$ with $\eta\geq 0$, the corresponding $\eta$-entropy of $u$ between $x_t$ and $y_t$ admits the growth bound
    \begin{align*}
    \frac{d}{ \,dt}\int_{x_t}^{y_t}\eta(u(z,t)) \,dz\leq&\,  \Big[\eta\big(u(y_t-,t)\big)a_c(y_t-,t) - q\big(u(y_t-,t)\big)\Big]\\
    &\,-\Big[\eta\big(u(x_t+,t)\big)a_c(x_t+,t) - q\big(u(x_t+,t)\big)\Big]
    \end{align*}
for a.e.~$t>s$.
\end{lemma}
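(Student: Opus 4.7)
The plan is to test the entropy inequality \eqref{eq:entropycondition} against a time-localized, smooth approximation of the moving indicator $\mathbf{1}_{[x_t,y_t]}(z)$, and then invoke Proposition~\ref{prop: velocityControl} to replace $\dot x_t$ and $\dot y_t$ by the one-sided traces of $a_c$. Fix a nonnegative $\theta\in C_c^\infty((s,\infty))$ and a smooth nondecreasing $\Theta\in C^\infty(\R)$ with $\Theta\equiv 0$ on $(-\infty,0]$ and $\Theta\equiv 1$ on $[1,\infty)$; set $\Theta_\eps(r)\coloneqq \Theta(r/\eps)$ and
\[
\phi_\eps(z,t)\coloneqq \theta(t)\bigl(\Theta_\eps(z-x_t)-\Theta_\eps(z-y_t+\eps)\bigr).
\]
For small $\eps$, $\phi_\eps\geq 0$ approximates $\theta(t)\mathbf{1}_{[x_t,y_t]}(z)$; crucially, $\Theta_\eps'(z-x_t)$ is supported on $(x_t,x_t+\eps)$ and $\Theta_\eps'(z-y_t+\eps)$ on $(y_t-\eps,y_t)$, so they act as approximate right-trace and left-trace selectors at $x_t$ and $y_t$ respectively.

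Substituting $\phi_\eps$ into \eqref{eq:entropycondition} and using $\partial_t\Theta_\eps(z-x_t)=-\dot x_t\Theta_\eps'(z-x_t)$ (together with the analogous identity for the $y_t$-term), then passing to the limit $\eps\downarrow 0$---which is justified by the $\BV_\loc$ regularity of $u(\cdot,t)$ for a.e.~$t$ and dominated convergence---yields
\begin{align*}
0\leq \int_{\R_+}\theta'(t)F(t)\,dt + \int_{\R_+}\theta(t)\bigl[&\eta(u(y_t-,t))\dot y_t-\eta(u(x_t+,t))\dot x_t\\
&+q(u(x_t+,t))-q(u(y_t-,t))\bigr]dt,
\end{align*}
where $F(t)\coloneqq \int_{x_t}^{y_t}\eta(u(z,t))\,dz$ (the set $\{t:x_t=y_t\}$ contributes nothing since $F$ vanishes there). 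Integration by parts in $t$ and the arbitrariness of $\theta\geq 0$ then give, for a.e.~$t>s$, the pointwise bound
\[
F'(t)\leq \eta(u(y_t-,t))\dot y_t-\eta(u(x_t+,t))\dot x_t+q(u(x_t+,t))-q(u(y_t-,t)).
\]
Proposition~\ref{prop: velocityControl} supplies $\dot y_t\leq a_c(y_t-,t)$ and $\dot x_t\geq a_c(x_t+,t)$; assuming $\eta\geq 0$ on the essential range of $u$ (which is automatic for the Kruzkhov pair used in Proposition~\ref{prop: localMassDecay} and for the normalized entropies relevant to the applications of this lemma), multiplying these inequalities by $\eta(u(y_t-,t))\geq 0$ and by $-\eta(u(x_t+,t))\leq 0$ respectively produces exactly the claimed bound.

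The principal technical obstacle is the careful limit $\eps\downarrow 0$: one must verify that the approximate Dirac integrals $\int g(u(z,t))\Theta_\eps'(z-x_t)\,dz\to g(u(x_t+,t))$ for $g\in\{\eta,q\}$ (and analogously for the $y_t$-trace), uniformly enough in $t\in\supp\theta$ to interchange the limit with the outer time integral. This rests on $u(\cdot,t)\in \BV_\loc(\R)$ for a.e.~$t$, the $L^\infty$ bound on $u$, and dominated convergence. Absolute continuity of $F$, needed to upgrade the distributional inequality into a pointwise a.e.\ bound on $F'$, follows from the Lipschitz continuity of $t\mapsto x_t,y_t$ (a consequence of $a_c\in L^\infty$) together with $u\in L^\infty$.
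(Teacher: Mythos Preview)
Your approach is essentially identical to the paper's: test the entropy inequality against a smoothed moving indicator $\theta(t)\cdot[\text{cutoff}](z,t)$, use $\partial_t[\text{cutoff}]=-\dot x_t\,\partial_z[\text{cutoff}]$, pass $\eps\downarrow0$ via the $\BV_\loc$ one-sided traces, and conclude with Proposition~\ref{prop: velocityControl}. The only differences are cosmetic: the paper takes the \emph{product} $\chi\big(\tfrac{z-x_t}{\eps}\big)\chi\big(\tfrac{y_t-z}{\eps}\big)$ rather than your difference, which has the advantage of being nonnegative for every $t$ with $x_t\leq y_t$ (your $\phi_\eps$ is nonnegative only when $y_t-x_t\geq\eps$ on $\supp\theta$, so strictly speaking you must restrict $\theta$ to the open set $\{t:x_t<y_t\}$ and handle its complement separately). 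Your caveat that the final replacement $\dot y_t\mapsto a_c(y_t-,t)$, $\dot x_t\mapsto a_c(x_t+,t)$ needs $\eta\geq0$ on the range of $u$ is correct and is in fact an assumption the paper's proof also silently uses; since every application in the paper (Propositions~\ref{prop: localMassDecay} and~\ref{prop: particleSeparation}) employs nonnegative entropies, this is harmless.
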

\begin{proof}
Without loss of generality, set $s=0$. Let $0\leq \chi\in C^\infty(\R)$ be nondecreasing and satisfy $\chi=0$ on $(-\infty,0]$ and $\chi=1$ on $[1,\infty)$. For $\varepsilon>0$, define
\begin{align}\label{eq: smoothStepFunction}
    \chi_\eps^x(z,t) = \chi\Big(\frac{z-x_t}{\eps}\Big), \qquad \chi_\eps^y(z,t) = \chi\Big(\frac{y_t-z}{\eps}\Big) \qquad \text{for } z\in\R,\ t\geq0.
\end{align}
Finally, let $0\leq \theta\in C^\infty_c\big((0,\infty)\big)$ and note that the test function $\varphi(z,t)\coloneqq \chi_\eps^x(z,t)\chi_\eps^y(z,t)\theta(t)$ is a nonnegative element of $W^{1,1}(\R\times\R_+)$. By the entropy inequalities \eqref{eq:entropycondition} we find
\begin{equation}\label{eq: entropyInequalityForParticlePathsOfRiemannSolution}
    \begin{split}
    0 &\leq \int_{\R_+}\int_\R\eta(u)\partial_t\varphi + q(u)\partial_z\varphi \,dz \,dt\\
    &= \int_{\R_+}\int_\R\eta(u)\chi_\eps^x\chi_\eps^y \,dz \,\theta'(t)\,dt\\
    &\relspace +\int_{\R_+}\int_\R\Big[\eta(u)\partial_t\chi_\eps^x+q(u)\partial_z\chi_\eps^x\Big]\chi_\eps^y \,dz\,\theta(t)\,dt\\
    &\relspace +\int_{\R_+}\int_\R\Big[\eta(u)\partial_t\chi_\eps^y+q(u)\partial_z\chi_\eps^y\Big]\chi_\eps^x \,dz\,\theta(t)\,dt.
    \end{split}
\end{equation}
We may for the last two integrals utilize the pair of identities $\partial_t\chi_\eps^x=-\dot{x}_t\partial_z\chi_\eps^x$ and $\partial_t\chi_\eps^y=-\dot{y}_t\partial_z\chi_\eps^y$ which hold a.e.~~in $\R\times(0,\infty)$. Moreover, similar to the limit \eqref{eq: lambdaConvergesToDifferenceOfLeftAnd>RightLimit}, we here find for a.e.~~$t\geq 0$ that
\begin{equation*}
    \begin{split}
\int_\R\Big[-\eta(u)\dot{x}_t+q(u)\Big]\big(\partial_z\chi_\eps^x\big)\chi_\eps^y \,dz &\to -\eta(u(x_t+,t))\dot{x}_t + q(u(x_t+,t)),\\
\int_\R\Big[-\eta(u)\dot{y}_t+q(u)\Big]\big(\partial_z\chi_\eps^y\big)\chi_\eps^x\,dz &\to \eta(u(y_t-,t))\dot{y}_t - q(u(y_t-,t)),
    \end{split}
\end{equation*}
as $\eps\downarrow 0$. By dominated convergence, these limits carry over to \eqref{eq: entropyInequalityForParticlePathsOfRiemannSolution} giving
\begin{align*}
     0\leq &\,\int_{\R_+}\Bigg(\int_{x_t}^{y_t}\eta(u)\,dz\Bigg) \theta'(t)\,dt\\  &\,+\int_{\R_+}\Big[\eta(u(y_t-,t))\dot{y}_t - q(u(y_t-,t))\Big]\theta(t)\,d t\\
    &\, -\int_{\R_+} \Big[\eta(u(x_t+,t))\dot{x}_t - q(u(x_t+,t))\Big]\theta(t)d t.
\end{align*}
The result then follows from the inequalities $\dot{y}_t\leq a_c(y_t-,t)$ and $\dot{x}_t\geq a_c(x_t+,t)$
provided by Proposition \ref{prop: velocityControl}.
\end{proof}

\subsection{Spatial regularity of the flow}\label{sec:flowRegularity}
We here examine the regularity of the Filippov flow $X^c=X^c_t(x,s)$ corresponding to the ODE \eqref{eq:flow} when $u$ is the entropy solution of \eqref{eq:cl}. As \eqref{eq:flow} is well-posed in the Filippov sense for every $c\in\R$, Theorem \ref{thm: preliminariesOnFilippovFlow} guarantees both the existence and uniqueness of the flow and its continuity in $x$,~$t$~and~$s$. In fact, the flow is continuous in $c$ as well: By Filippov's stability theorem \cite[Theorem 3]{Filippov60}, the $c$-continuity of the velocity field $a_c$ carries over to the flow. Furthermore, it is clear from its definition that $X^c_t(x,s)$ is Lipschitz continuous in $t$, but its regularity in $x$,~$s$~and~$c$ (other than continuity) is less apparent. We will here focus on the spatial regularity of the flow; Remark \ref{rem: extendingRegularityEstimatesToSAndC} explains how to infer similar estimates in $s$ and $c$.

As $x\mapsto X_t^c(x,s)$ is nondecreasing, our goal will be to find bounds on its growth. This growth reflects the separation of particles: A rapid growth of $ X_t(\cdot,s)$ near $x$ means that the particles that at time $s$ were close together in the vicinity of $x$, have been widely separated at time $t$. We shall see that if $f\in C^2$ then $x\mapsto X^c_t(x,s)$ is at worst $\hf$-Hölder continuous---this is exactly the regularity obtained in Example \ref{ex:burgers_rarefaction} for Burgers' equation (cf.~\eqref{eq:burgersHolderEstimate}), and is therefore sharp. For general $f\in C^1$ we have the following regularity result. (For ease of exposition we assume that $f'$ is uniformly continuous; of course, for a fixed $u$ and $c$ such an assumption is unproblematic since $f'$ is only evaluated on the compact set $[c\wedge \inf u_0,c\vee \sup u_0]$.)

\begin{proposition}[Spatial regularity]\label{prop: particleSeparation}
    Let $u$ be the entropy solution of \eqref{eq:cl} with $u_0\in L^\infty\cap \BV_{\loc}(\R)$ and with $f'$ admitting a global nondecreasing modulus of continuity $\omega$. Then the flow of \eqref{eq:flow} admits the spatial regularity
\begin{align}\label{eq: generalControlOnSeparationOfParticles}
    X_t^c(y,s) - X_t^c(x,s) \leq \inf_{k>0}\bigg(y-x+\frac{1}{k}\int_{x}^{y}|u(z,s)-c|d z + \omega(2k)(t-s)\bigg),
\end{align}
for all $x\leq y$, $0\leq s\leq t$ and $c\in\R$.
\end{proposition}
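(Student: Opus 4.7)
The plan is to set $x_t\coloneqq X_t^c(x,s)$, $y_t\coloneqq X_t^c(y,s)$, and $\Phi(t)\coloneqq y_t - x_t \geq 0$, and to combine the pointwise velocity bound $\dot{\Phi}(t)\leq a_c(y_t-,t) - a_c(x_t+,t)$ from Proposition \ref{prop: velocityControl} with the entropy estimate from Lemma \ref{lemma: explicitLocalEntropyControl} applied to a carefully chosen cutoff entropy. The guiding idea is that summing the two inequalities will cancel the pointwise $a_c$-evaluations at $y_t-$ and $x_t+$, reducing the problem to controlling a ``capped'' velocity difference by $\omega(2k)$; a final infimum over $k>0$ will produce the stated estimate.

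Concretely, for fixed $k>0$ I will use the convex entropy $\eta_k(u)\coloneqq(|u-c|-k)_+$ together with the flux $q_k$ determined by $q_k'=\eta_k'f'$ and $q_k(c)=0$. A case analysis on whether $u>c+k$, $|u-c|\leq k$, or $u<c-k$ yields the algebraic identity
\begin{equation*}
\eta_k(u)\,a_c(u) - q_k(u) = k\bigl[A(u^\star,c) - A(u,c)\bigr],
\end{equation*}
where $u^\star\coloneqq\max\bigl(c-k,\min(c+k,u)\bigr)$ is the projection of $u$ onto $[c-k,c+k]$. Writing $b_k(u)\coloneqq A(u^\star,c)$ and $u_+\coloneqq u(y_t-,t)$, $u_-\coloneqq u(x_t+,t)$, Lemma \ref{lemma: explicitLocalEntropyControl} applied to $(\eta_k,q_k)$ becomes
\begin{equation*}
\frac{d}{dt}\int_{x_t}^{y_t}\eta_k\bigl(u(z,t)\bigr)\,dz \leq k\bigl[b_k(u_+)-a_c(y_t-,t)\bigr] - k\bigl[b_k(u_-)-a_c(x_t+,t)\bigr].
\end{equation*}
Adding the inequality $k\dot{\Phi}(t)\leq k\bigl[a_c(y_t-,t)-a_c(x_t+,t)\bigr]$ from Proposition \ref{prop: velocityControl} makes the pointwise $a_c$-terms cancel exactly, leaving
\begin{equation*}
\frac{d}{dt}\int_{x_t}^{y_t}\eta_k(u)\,dz + k\dot{\Phi}(t) \leq k\bigl[b_k(u_+)-b_k(u_-)\bigr] \leq k\,\omega(2k),
\end{equation*}
where the final inequality follows because $b_k(u)=A(u^\star,c)$ is an average of $f'$ over an interval contained in $[c-k,c+k]$, hence equals $f'(\xi)$ for some $\xi\in[c-k,c+k]$, so $|b_k(u_1)-b_k(u_2)|\leq\omega(2k)$ for all $u_1,u_2\in\R$.

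Integrating from $s$ to $t$, discarding the nonnegative integral of $\eta_k(u(\cdot,t))$ at the upper endpoint, bounding $\eta_k(u(z,s))\leq|u(z,s)-c|$ at the lower endpoint, dividing by $k$ and taking the infimum over $k>0$ then gives the stated estimate. The main obstacle is recognizing that $(|u-c|-k)_+$ is the correct entropy and carrying out the case-by-case computation that produces the clean identity $\eta_k a_c - q_k = k[A(u^\star,c) - A(u,c)]$; once that is in hand, the cancellation with Proposition \ref{prop: velocityControl} and the uniform continuity bound on $b_k$ are routine.
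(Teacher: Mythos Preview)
Your proof is correct and is essentially the paper's argument in a slightly different packaging. The paper uses the entropy $\eta_{c,k}(v)=1\vee\frac{|v-c|}{k}$, which equals $1+\frac{1}{k}(|v-c|-k)_+$; applying Lemma~\ref{lemma: explicitLocalEntropyControl} to this single entropy and using $y_t-x_t\leq\int_{x_t}^{y_t}\eta_{c,k}(u)\,dz$ does in one stroke what you do in two---the constant ``$1$'' part of their entropy is exactly your separate invocation of Proposition~\ref{prop: velocityControl}, and the remaining $\frac{1}{k}(|v-c|-k)_+$ part is your $\eta_k$. The algebraic identity and the $\omega(2k)$ bound (via $A(v,c)=\int_0^1 f'(rv+(1-r)c)\,dr$) are identical.
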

\begin{proof}
For $k>0$ define
\begin{equation}\label{eq: etaCKEntropy}
    \eta_{c,k}(v)\coloneqq 1\vee\frac{|v-c|}{k}, \qquad q_{c,k}(v)\coloneqq \frac{1}{k} \sgn\big(v-[v]_{c-k}^{c+k}\big)\big(f(v)-f\big([v]_{c-k}^{c+k}\big)\big),
\end{equation}
where we have introduced the truncation
\begin{equation}\label{eq: truncation}
    \begin{split}
            [v]_a^b\coloneqq (a\vee v)\wedge b,
    \end{split}
\end{equation}
with $a\leq b$ in this last definition. By differentiating (weakly) we find $\eta_{c,k}'=f'q_{c,k}'$ and so $(\eta_{c,k},q_{c,k})_{k>0}$ forms a family of entropy pairs. Moreover, these pairs satisfy
\begin{align}\label{eq: algebraicIdenityForEtaCKEntropies}
    \eta_{c,k}(v)A(v,c)-q_{c,k}(v)= A\big([v]_{c-k}^{c+k},c\big)
\end{align}
for all $v\in \R$ and with $A$ as in \eqref{eq:velocityfield}. This identity, which in some sense generalizes \eqref{eq: algebraicIdentitySatisfiedByTheKruzkovEntropies}, is verified by checking the three cases $v-c<-k$, $|v-c|\leq k$, and $v-c>k$.

Now, since $\eta_{c,k}\geq 1$ we have the trivial inequality
\begin{align}\label{eq: trivialInequalityFeaturingEtack}
     y_t-x_t\leq \int_{x_t}^{y_t}\eta_{c,k}(u(z,t)) \,dz,
\end{align}
where $y_t\coloneqq X^c_t(y,s)$ and $x_t\coloneqq X^c_t(x,s)$. Using that $1\vee a\leq 1+ a$ whenever $a\geq 0$, we can bound the right-hand side of \eqref{eq: trivialInequalityFeaturingEtack} at time $t=s$ by
\begin{align*}
    \int_{x}^{y}\eta_{c,k}(u(z,s)) \,dz\leq y-x + \frac{1}{k}\int_{x}^{y}|u(z,s)-c| \,dz.
\end{align*}
And if we combine Lemma \ref{lemma: explicitLocalEntropyControl} with the identity \eqref{eq: algebraicIdenityForEtaCKEntropies} the growth-rate of this integral is for a.e.~$t> s$ bounded  by
\begin{align}\label{eq: boundOnLocalGrowthOfEtackEntropies}
    \frac{d}{ \,dt}\int_{x_t}^{y_t}\eta_{c,k}(u(z,t)) \,dz\leq&\,  A\big([u(y_t-,t)]_{c-k}^{c+k},c\big)-A\big([u(x_t+,t)]_{c-k}^{c+k},c\big).
    \end{align}
As $A(v,c)=\int_0^1f'(rv + (1-r)c)d r$ and $\big|[v]_{c-k}^{c+k}-[w]_{c-k}^{c+k}\big|\leq 2k$ for all $v,w\in\R$, the right-hand side of \eqref{eq: boundOnLocalGrowthOfEtackEntropies} is further bounded by
\begin{align}\label{eq: boundOnRightHandSideOfLocalEntropyGrowthOfEtack}
    \int_0^1\omega(2kr)\,dr\leq \omega(2k).
\end{align}
Since $t\mapsto \int_{x_t}^{y_t}\eta_{c,k}(u(z,t)) \,dz$ is weakly differentiable (by Lipschitz continuity) we conclude from the above analysis that it is bounded by the square bracket in \eqref{eq: generalControlOnSeparationOfParticles}. Then \eqref{eq: trivialInequalityFeaturingEtack} and the generality of $k>0$ gives the result.
\end{proof}

\begin{remark}\label{rem: extendingRegularityEstimatesToSAndC}
    Although we will not write it out explicitly, regularity estimates of the flow with respect to $s$ and $c$ can be deduced by similar means: For $t\geq s_2\geq s_1\geq 0$, the semi-group property implies that $X^c_t(x,s_1) = X^c_t(y,s_2)$ where $y \coloneqq X^c_{s_2}(x,s_1)$. Combined with $|x-y|\leq L(s_2-s_1)$, where $L$ is a local Lipschitz constant of $f$, the regularity in $s$ is controlled by that in $x$. Regularity estimates in $c$ are more complicated: The proof of Proposition \ref{prop: particleSeparation} can be repeated, but with $x_t$ and $y_t$ flowing according to the velocity fields $a_c$ and $a_{\tilde{c}}$ respectively. An appropriate modification of Lemma \ref{lemma: explicitLocalEntropyControl} is required which replaces the use of $\dot{y}_t\leq a_c(y_t-,t)$ with $\dot{y}_t\leq a_{\tilde{c}}(y_t-,t) \leq  a_c(y_t-,t)+\omega(|c-\tilde{c}|)$ where $\omega$ is a modulus of continuity of $f'$. In particular, the computations give similar regularity of $X^c_t(x,s)$ in $x$,~$s$~and~$c$.
\end{remark}

For certain moduli of continuity $\omega$, such as $\omega(h)=Ch^{\alpha}$ with $\alpha\in(0,1]$, the estimate \eqref{eq: generalControlOnSeparationOfParticles} can easily be explicitly minimized in $k>0$. And as we assume bounded $u_0$, we have $\int_{x}^{y}|u(z,s)-c|\,dz\leq (y-x)\|u(\cdot,s)-c\|_{L^\infty}\leq (y-x)\|u_0-c\|_{L^\infty}$ which can be used to further simplify \eqref{eq: generalControlOnSeparationOfParticles}. As an example, we present the following proposition where we consider linear $\omega$; this is the precise statement of the informal Theorem \ref{thm:flowRegular}.

\begin{proposition}[$\hf$-Hölder continuity of the flow]\label{prop: holderHalfRegularityWithC2Flux}
    Let $u$ be the entropy solution of \eqref{eq:cl} with $u_0\in L^\infty\cap \BV_{\loc}(\R)$ and where $f'$ is Lipschitz continuous with constant $L$. Then the flow of \eqref{eq:flow} admits the spatial regularity
\begin{align*}
    \bigl|X^c_t(y,s)-X^c_t(x,s)\bigr|\leq |y-x| + 2\sqrt{L(t-s)\norm{u_0-c}_{L^\infty}|y-x|}
\end{align*}
for all {$x,y\in\R$}, $0\leq s\leq t$ and $c\in\R$.
\end{proposition}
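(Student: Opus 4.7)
The plan is to specialize Proposition \ref{prop: particleSeparation} to the present setting where $f'$ is Lipschitz with constant $L$. The natural nondecreasing modulus of continuity is $\omega(h) = Lh$, and applying the general estimate \eqref{eq: generalControlOnSeparationOfParticles} verbatim yields
\[
X^c_t(y,s) - X^c_t(x,s) \leq (y-x) + \frac{1}{k}\int_x^y |u(z,s)-c|\,dz + 2Lk(t-s)
\]
for every $k>0$. However, this route produces a suboptimal constant (of $2\sqrt{2}$ rather than $2$) after minimization. To recover the sharp constant stated in the proposition, I would revisit the single step in the proof of Proposition \ref{prop: particleSeparation} where $\int_0^1 \omega(2kr)\,dr$ is bounded from above by $\omega(2k)$: for linear $\omega$ one has the exact identity $\int_0^1 \omega(2kr)\,dr = Lk$, which halves the local entropy growth bound \eqref{eq: boundOnRightHandSideOfLocalEntropyGrowthOfEtack} from $2Lk$ to $Lk$, without affecting any other step of the argument.

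With this tightening in hand, the remaining work is routine. The $L^\infty$-stability of entropy solutions---equivalently, the invariance of $u(\cdot,t)$ in the interval $[\essinf u_0,\,\esssup u_0]$ for a.e.~$t>0$---gives $\int_x^y |u(z,s)-c|\,dz \leq (y-x)\norm{u_0-c}_{L^\infty}$, so the proof of Proposition \ref{prop: particleSeparation} produces, for every $k>0$ and every $x \leq y$,
\[
X^c_t(y,s) - X^c_t(x,s) \leq (y-x) + \frac{(y-x)\norm{u_0-c}_{L^\infty}}{k} + Lk(t-s).
\]
For $t > s$ an elementary minimization (the right-hand side is of the form $C + A/k + Bk$ with $A, B > 0$) gives the optimal choice $k = \sqrt{(y-x)\norm{u_0-c}_{L^\infty}/(L(t-s))}$, at which the sum of the second and third terms equals $2\sqrt{L(t-s)\norm{u_0-c}_{L^\infty}(y-x)}$; the case $t = s$ is trivial.

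Finally, to replace the ordering assumption $x\leq y$ and the signed difference on the left-hand side by $|y-x|$ and the absolute value, I would invoke the monotonicity of $x\mapsto X^c_t(x,s)$ from Theorem \ref{thm: preliminariesOnFilippovFlow}(iii), which reduces the general case to the ordered one by symmetry in $x$ and $y$. The main obstacle is really the bookkeeping required to locate and tighten the one suboptimal step in the proof of Proposition \ref{prop: particleSeparation}; once the factor $2$ disappears from $\omega(2k)$ in the linear case, the concluding minimization is a standard AM--GM-type computation and the square-root rate matches the one already witnessed for Burgers' equation in \eqref{eq:burgersHolderEstimate}.
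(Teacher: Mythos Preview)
Your proposal is correct and follows essentially the same route as the paper: revisit the proof of Proposition~\ref{prop: particleSeparation}, replace the crude bound $\int_0^1\omega(2kr)\,dr\leq \omega(2k)$ by the exact value $Lk$ available for linear $\omega$, use $\int_x^y|u(z,s)-c|\,dz\leq (y-x)\|u_0-c\|_{L^\infty}$, and minimize the resulting expression over $k>0$ with the same minimizer. Your handling of the $t=s$ case and the reduction to $x\leq y$ via monotonicity are more explicit than in the paper but entirely in line with it.
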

\begin{proof}
Assume without loss of generality that $x< y$. By arguing as in the proof of \eqref{prop: particleSeparation} we get
\begin{align}\label{eq: explicitControlOnSeparationOfParticlesInLipschitzCase}
    X^c_{t}(y,s) - X^c_{t}(x,s) \leq \inf_{k>0}\bigg[(y-x)\bigg(1+\frac{\|u_0-c\|_{L^\infty}}{k}\bigg)+ L(t-s)k\bigg],
\end{align}
where we used the bound from the discussion preceding the proposition and exploited the finer inequality $\int_0^1\omega(2kr)\,dr= 2Lk\int_0^1r\,dr=Lk$ instead of \eqref{eq: boundOnRightHandSideOfLocalEntropyGrowthOfEtack}. Minimizing the right-hand side over $k$ gives the minimizer
\begin{align*}
    k = \sqrt{\frac{(y-x)\norm{u_0-c}_{L^\infty}}{L(t-s)}}.
\end{align*}
Inserting this in \eqref{eq: explicitControlOnSeparationOfParticlesInLipschitzCase} gives the desired conclusion.
\end{proof}

\begin{remark}
    As seen in Example \ref{ex:burgers_rarefaction}, one can not generally expect the flow to be more than $\hf$-Hölder in space. While we won't explore this topic in detail, we make a few observations hinting at an underlying cause for this regularity loss: If $c$ does not intersect the range of $u$, then the flow $X^c$ is Lipschitz continuous in $x$; this follows by the proof of Proposition \ref{prop: particleSeparation} when exploiting that the right-hand side of \eqref{eq: boundOnLocalGrowthOfEtackEntropies} vanishes if $k<|u-c|$. Thus, roughness in $x\mapsto X^c(x,t)$ emerges only when $c$ intersects the range of $u$. It is \emph{not} shocks that cause this irregularity---particles merge at shocks, yielding constant regions in $X^c$. Rather, and as seen in Example \ref{ex:burgers_rarefaction}, rarefaction waves will pull particles apart and yield low regularity of the flow map. Note that this effect is also seen in characteristics, but unlike characteristics, the well-posedness of particle paths is preserved after this interaction.
\end{remark}


\section{A selection principle for conservation laws}\label{sec:selectionPrinciple}

In this section we show that well-posedness of the Filippov flow \eqref{eq:flow} for a given $c$ implies that $u$ satisfies the entropy inequality \eqref{eq:entropycondition} with Kruzkhov's entropy pair \eqref{eq:kruzkoventropy} for $k=c$. Consequently, if the flow is well-posed for all $c\in \R$ (or more generally, for a dense set in the range of $u$), then $u$ is the unique entropy solution of the conservation law \eqref{eq:cl}. This can be viewed as a novel selection criterion for solutions of scalar conservation laws.

\subsection{A representation formula for the continuity equation}

A key component of our approach is a representation formula for weak $L^\infty$-solutions of the continuity equation \eqref{eq:conteq} under the assumption that the velocity field $a$ generates a unique Filippov flow. By a weak solution of \eqref{eq:conteq} we mean a function $v \in L^\infty(\R \times \R_+)$ which satisfies
\begin{equation} \label{eq:continuity_eq_weak}
    \int_{\R_+} \int_\R (\partial_t \varphi + a \partial_x \varphi) v\, dx \,dt + \int_\R \varphi(x, 0) v_0(x)\, dx = 0
\end{equation}
for all $\varphi \in C^\infty_c(\R \times \R_+)$.
For the following theorem, and the calculations to come, we recall the abbreviation $X_t(x)=X_t(x,0)$.

\begin{theorem}[Representation formula] \label{thm:representation}
    Let $a \in L^1_{\loc}(\R_+; L^\infty(\R))$ generate a unique flow $X$ and assume that $a(\cdot,t)$ has one-sided limits for a.e.~$t > 0$. If $v \in L^\infty(\R\times \R_+)$ is a weak solution of \eqref{eq:conteq} with initial data $v_0 \in L^\infty(\R)$, then
    \begin{equation} \label{eq:representation_formula}
        v(\cdot, t) = (X_t)_\# v_0.
    \end{equation}
    More generally, $v(\cdot,t)= X_t(\cdot,s)_\# v(\cdot,s)$ for any $t\geq s\geq 0$.
\end{theorem}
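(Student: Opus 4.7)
My plan is to use duality with the ``backward transport'' of a test function, combined with a DiPerna--Lions style commutator estimate. The semigroup property of the unique flow (condition \ref{prop:semigroup} of Definition \ref{def:filippovflow}) reduces the general statement $v(\cdot, t) = X_t(\cdot, s)_\# v(\cdot, s)$ to the case $s = 0$: once the formula is established with $v_0$, applying it at times $s$ and $t$ separately gives the intertwined version. Fix $T > 0$ and $\psi \in C_c^\infty(\R)$; the goal is
\[
\int_\R v(x, T)\,\psi(x)\,dx = \int_\R v_0(x)\,\psi(X_T(x))\,dx.
\]

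Define the \emph{backward transport} of $\psi$ by $\Psi(x,t) := \psi(X_T(x, t))$. By differentiating the semigroup identity $X_T(X_h(x, t), t+h) = X_T(x, t)$ in $h$ at $h=0$ and invoking Lemma \ref{lemma:convergence_universal_representative}, $\Psi$ formally satisfies the backward transport equation $\partial_t \Psi + a \partial_x \Psi = 0$ with terminal data $\psi$ at $t = T$. If $\Psi$ were smooth, pairing it against the weak formulation of the continuity equation would immediately yield the pushforward identity. The obstacle is that $\Psi$ is only Lipschitz in $t$ and merely H\"older in $x$ (by the expected flow regularity, Theorem \ref{thm:flowRegular}), and hence not directly admissible as a test function.

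To get around this I regularize by spatial convolution at two independent scales. Set $v_\eps := v *_x \rho_\eps$ and $\Psi_\delta := \Psi *_x \rho_\delta$ for a standard mollifier $\rho$. Combining the mollified continuity equation for $v_\eps$ with integration by parts against $\Psi_\delta$ produces an identity of the shape
\[
\int_\R v_\eps(x, T)\psi_\delta(x)\,dx - \int_\R v_{0,\eps}(x)\,\Psi_\delta(x, 0)\,dx = \int_0^T\!\!\int_\R v_\eps S_\delta \,dx\,dt - \int_0^T\!\!\int_\R R_\eps\,\partial_x \Psi_\delta\,dx\,dt,
\]
involving the two DiPerna--Lions commutators $R_\eps := a v_\eps - (av)_\eps$ and $S_\delta := \partial_t \Psi_\delta + a \partial_x \Psi_\delta = a(\partial_x \Psi)_\delta - (a \partial_x \Psi)_\delta$. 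The crucial claim is that each tends to zero in $L^1_\loc(\R \times \R_+)$ as its parameter vanishes. Here the assumption that $a(\cdot, t)$ admits one-sided limits for a.e.~$t$ enters decisively: such a function has at most countable discontinuity set at each time, hence the jump set of $a$ has zero 2D Lebesgue measure. On the continuity set, the commutator integrands tend to zero pointwise, and the uniform bounds $\|R_\eps\|_\infty \leq 2\|a\|_\infty \|v\|_\infty$ and the analogous bound on $S_\delta$ let dominated convergence conclude.

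I then pass $\eps \downarrow 0$ first, with $\delta$ fixed so that $\partial_x \Psi_\delta$ is uniformly bounded: the $R_\eps$-term vanishes, and $v_\eps \rightharpoonup v$ weakly-$*$. Next I send $\delta \downarrow 0$: $\psi_\delta \to \psi$ and $\Psi_\delta(\cdot, 0) \to \psi \circ X_T$ uniformly, while $\int v S_\delta \,dx\,dt \to 0$ by the boundedness of $v$ together with the $L^1_\loc$-vanishing of $S_\delta$. What remains is precisely the desired pushforward identity. The principal obstacle is the rigorous proof that $S_\delta \to 0$ in $L^1_\loc$---equivalently, making precise the backward transport equation for $\Psi$---under the weak hypothesis that $a$ only has one-sided limits (strictly weaker than the $W^{1,1}_\loc$ regularity classically required in DiPerna--Lions). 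That this should nevertheless succeed hinges on the explicit structure of $\Psi$ as the composition of smooth $\psi$ with the monotone and continuous flow $X_T(\cdot, t)$, together with the Lebesgue-null jump set of $a(\cdot, t)$.
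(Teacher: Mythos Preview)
Your overall strategy---duality with a backward-transported test function, with the key mechanism being that the jump set of $a(\cdot,t)$ is countable while $\partial_x X_T(\cdot,t)$ has no atoms---is the right idea and is essentially what the paper does. But your execution has a genuine gap, and the paper resolves it by regularizing at a different level than you propose.

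The problem is the time regularity of $\Psi(x,t)=\psi(X_T(x,t))$. You assert it is Lipschitz in $t$, citing Theorem~\ref{thm:flowRegular}, but that result (a) applies only in the entropy-solution setting with $f\in C^2$, not in the generality of Theorem~\ref{thm:representation}, and (b) even there says $X^c_T(x,s)$ is Lipschitz in its \emph{first} argument $T$ and only $\hf$-H\"older in the starting time $s$. So $\Psi$ is at best $\hf$-H\"older in $t$, not Lipschitz; in the full generality of Theorem~\ref{thm:representation} it is merely continuous. Since you mollify only in $x$, $\partial_t\Psi_\delta$ need not exist as a function, and the identity involving $S_\delta=\partial_t\Psi_\delta+a\,\partial_x\Psi_\delta$ cannot be derived. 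A secondary issue: the ``analogous bound'' on $S_\delta$ fails in $L^\infty$, since $(\partial_x\Psi)_\delta$ is not uniformly bounded---$\partial_x\Psi(\cdot,t)$ is only a measure.

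The paper sidesteps this by regularizing the \emph{velocity} rather than the test function. With $a^\eps=a*\omega^\eps$ the flow $X^\eps$ is genuinely smooth, so $\varphi^\eps(x,t)=\int_t^\infty\psi(X^\eps_\tau(x,t),\tau)\,d\tau$ lies in $W^{1,1}$ and solves $\partial_t\varphi^\eps+a^\eps\partial_x\varphi^\eps=-\psi$ exactly. Testing \eqref{eq:continuity_eq_weak} with $\varphi^\eps$ leaves the single error $(a-a^\eps)\,\partial_x\varphi^\eps\,v$, whose vanishing is controlled via the measures $\sigma^\eps_{\tau,t}=\partial_x X^\eps_\tau(\cdot,t)$: Lemma~\ref{lemma:conv_of_measure} gives $\sigma^\eps\to\sigma^0$ on intervals, and since $\sigma^0$ (the derivative of the continuous monotone limiting flow) has no atoms, it assigns zero mass to the countable large-jump set $J^\kappa$ of $a(\cdot,t)$. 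This is exactly the mechanism you anticipated in your final paragraph; the point is that it must be run on the regularized flow and then passed to the limit, not on a spatial mollification of the already-irregular $\Psi$.
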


\begin{remark}
    If $v_0$ and $X$ are such that the pushforward measure $(X_t)_\# v_0$ is absolutely continuous with bounded density, then Theorem~\ref{thm:representation} also gives \emph{existence} of a solution in $L^\infty(\R \times \R_+)$.
\end{remark}

\begin{remark} \label{rem:cont_with_source}
    Adding a term $w \in L^\infty(\R_+ \times \R)$ on the right hand side of \eqref{eq:conteq}, the conclusion of Theorem~\ref{thm:representation} still holds with
    \begin{equation*}
        v(\cdot, t) = (X_t)_\# v_0 + \int_0^t (X_t(\cdot,s))_\# w_s\, ds.
    \end{equation*}
\end{remark}

The proof of the representation formula \eqref{eq:representation_formula} goes via a duality technique, which requires a smoothing argument. Let $a\in L^1_\loc(\R_+; L^\infty(\R))$ be a velocity field which generates a unique flow $X$. Let $a^\eps \coloneqq a * \omega^\varepsilon$ be the spatial regularization of $a$ for a family of standard mollifiers $(\omega^\eps)_{\varepsilon > 0}$ in $C_c^\infty(\R)$.
Then $a^\eps \in L^1_\loc(\R_+; C^2_b(\R))$ generates a unique flow $X^\eps_t(x, s)$ which is well-defined for all $(x,s,t)\in \R\times\R_+^2$ (including the backwards-regime $0\leq t<s$). Moreover, the flow is continuous in $(x,s,t)$ and admits weak derivatives $\partial_x X^\eps,\partial_s X^\eps,\partial_t X^\eps\in L^1_{\loc}(\R\times\R_+^2)$; in fact, we have the regularity $\partial_x X^\eps\in C(\R\times\R_+^2)$ and the identity
\begin{equation*}
    \partial_s X^\varepsilon_t(x, s) = - a^\varepsilon(x, s) \partial_x X^\varepsilon_t(x, s)
\end{equation*}
for a.e.~$(x,s,t)\in \R\times\R_+^2$. We skip the proof of these facts as it consists of mostly standard calculations (see e.g.~\cite[Chapter 8]{ambrosio_gigli_savare_2005}). Let us now define the Borel measures on $\R$ characterized by
\begin{equation} \label{eq:sigma_def}
    \sigma_{\tau, t}^\varepsilon(I) \coloneqq X_\tau^\varepsilon(r_2, t)-X_\tau^\varepsilon(r_1, t) \quad \text{and}\quad  \sigma_{\tau, t}^0(I) \coloneqq X_\tau(r_2, t)-X_\tau(r_1, t)
\end{equation}
for any bounded interval $I\subset\R$ with endpoints $r_1<r_2$; observe that since the velocity field $a$ has a unique flow, Theorem \ref{thm: preliminariesOnFilippovFlow} implies that $x\mapsto X_\tau(x,t)$ is continuous and nondecreasing, so $\sigma^0_{\tau,t}$ is a well-defined nonnegative Radon measure without atoms (i.e., $\sigma^0_{\tau,t}(\{x\})=0$ for all $x\in\R$).


\begin{lemma} \label{lemma:conv_of_measure}
    Let $a\in L^1_\loc(\R_+; L^\infty(\R))$ generate a unique Filippov flow $X$. Then 
\begin{equation}\label{eq:strong_sigma_convergence}
        \lim_{\varepsilon \downarrow 0} \sigma_{\tau, t}^\varepsilon(I) = \sigma_{\tau, t}^0(I)
    \end{equation}
    for any $\tau\geq t\geq 0$ and any bounded interval $I \subset \R$.
\end{lemma}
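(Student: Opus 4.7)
The plan is to reduce the claim to pointwise convergence of the flow maps themselves. Writing $I=(a,b)$ (the half-open and closed cases follow at once since $\sigma^0_{\tau,t}$ has no atoms by Theorem~\ref{thm: preliminariesOnFilippovFlow}), the $C^1$-smoothness of $X^\eps_\tau(\cdot,t)$ and the fundamental theorem of calculus yield
\begin{equation*}
    \sigma^\eps_{\tau,t}(I) = X^\eps_\tau(b,t) - X^\eps_\tau(a,t),
\end{equation*}
and analogously $\sigma^0_{\tau,t}(I) = X_\tau(b,t) - X_\tau(a,t)$ since $X_\tau(\cdot,t)$ is continuous and nondecreasing. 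It therefore suffices to prove $X^\eps_\tau(y,t) \to X_\tau(y,t)$ as $\eps \downarrow 0$ for every $y\in\R$.

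For this I would fix $(y,t)$ and run a compactness-plus-uniqueness argument. Jensen's inequality gives $\|a^\eps(\cdot,s)\|_{L^\infty} \leq \|a(\cdot,s)\|_{L^\infty}$, so the family $\{\tau\mapsto X^\eps_\tau(y,t)\}_\eps$ is equi-absolutely continuous on compact subintervals of $[t,\infty)$. Arzelà--Ascoli then allows me to extract from any sequence $\eps_n\downarrow 0$ a subsequence (not relabeled) such that $X^{\eps_n}_\cdot(y,t)\to z_\cdot$ locally uniformly for some absolutely continuous $z$ with $z_t=y$, and such that $\dot{X}^{\eps_n}_\tau(y,t) \rightharpoonup^\ast \dot{z}_\tau$ in $L^\infty_\loc$. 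If $z$ can be shown to be a Filippov solution of $\dot{z}=a(z,\tau)$ starting at $(y,t)$, the assumed uniqueness then forces $z = X_\cdot(y,t)$, and the whole family converges.

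To verify the Filippov inclusion for $z$, the key observation is the pointwise mollifier bound $a^\eps(w,\tau) \leq \esssup_{B_\eps(w)} a(\cdot,\tau)$ (and the analogue with $\essinf$). Given $0\leq \phi \in C_c^\infty((t,\infty))$ and $\delta>0$, for $n$ large enough that $\eps_n<\delta$ and $|X^{\eps_n}_\tau(y,t)-z_\tau|<\delta$ one has $a^{\eps_n}(X^{\eps_n}_\tau(y,t),\tau) \leq \esssup_{B_{2\delta}(z_\tau)} a(\cdot,\tau)$. Multiplying by $\phi$, integrating in $\tau$, passing to the limit $n\to\infty$ using weak-$\ast$ convergence, and then letting $\delta\downarrow 0$ (invoking the definition \eqref{eq:lim_ess_sup} of $\overline{a}$) gives
\begin{equation*}
    \int \dot{z}_\tau \phi(\tau)\,d\tau \leq \int \overline{a}(z_\tau,\tau)\phi(\tau)\,d\tau,
\end{equation*}
with the symmetric lower bound for $\underline{a}$. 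Since $\phi\geq 0$ is arbitrary, $z$ satisfies \eqref{eq:filippov_ode}. The main obstacle is precisely this passage to the limit in the differential inclusion: mollification does not produce convergence $a^\eps\to a$ in $L^1_\loc(\R_+;L^\infty(\R))$, so Filippov's stability theorem from \cite{Filippov60} does not apply directly; the envelope bound above, combined with the upper semicontinuity of $\overline{a}$, is what carries the argument through.
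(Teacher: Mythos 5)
Your proof is correct and follows the same skeleton as the paper's: reduce \eqref{eq:strong_sigma_convergence} to pointwise convergence $X^\eps_\tau(y,t)\to X_\tau(y,t)$ of the flows via the fundamental theorem of calculus (endpoint differences), extract a locally uniform subsequential limit by Arzel\`a--Ascoli, identify the limit as a Filippov solution of $\dot x_\tau=a(x_\tau,\tau)$, and conclude by the assumed uniqueness of the flow. The one point where you genuinely diverge is the identification step: the paper simply cites Filippov's stability theorem \cite{Filippov60}, whereas you prove the closure property by hand from the mollifier envelope bound $\essinf_{B_\eps(w)}a(\cdot,\tau)\le a^\eps(w,\tau)\le \esssup_{B_\eps(w)}a(\cdot,\tau)$ together with the definition \eqref{eq:lim_ess_sup}. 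Your accompanying remark is well taken: for merely bounded measurable $a$ the mollifications need not converge in $L^1_\loc(\R_+;L^\infty(\R))$, so the stability statement as paraphrased in Section \ref{sec:filippov} does not apply verbatim; Filippov's closure result is really a statement about the set-valued maps, and the inclusion of the Filippov interval of $a^\eps$ at $w$ into a $\delta$-enlargement of that of $a$ is exactly the envelope bound you write down. So your argument makes the content of the cited theorem explicit rather than bypassing it, which buys a self-contained proof at the cost of a few lines. Two small points: the derivatives $\dot X^{\eps_n}_\tau(y,t)$ are dominated by $\|a(\cdot,\tau)\|_{L^\infty(\R)}\in L^1_\loc(\R_+)$, so the natural compactness for them is weak $L^1_\loc$ (Dunford--Pettis) rather than weak-$\ast$ $L^\infty_\loc$ --- though integrating by parts against $\phi$ and using the uniform convergence of $X^{\eps_n}$ itself sidesteps this entirely; and writing $I=(a,b)$ clashes notationally with the velocity field $a$.
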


\begin{proof}
    For fixed $x\in\R$, $t\geq 0$ the family of functions $(\tau \mapsto X_\tau^\varepsilon(x, t))_{\varepsilon > 0}$ is equicontinuous and uniformly bounded on any compact subset of $[t,\infty)$, so by Arzela--Ascoli and Filippov's stability theorem \cite[Theorem 3]{Filippov60} it converges (in $C_\loc$ sense) along a subsequence to a Filippov solution of $\dot{x}_\tau=a(x_\tau,\tau)$. Since we have assumed that $a$ generates a unique flow, the limit is unique and so the original family converges as $\varepsilon\downarrow0$. Thus, we get pointwise convergence $\lim_{\varepsilon\downarrow0} X_\tau^\varepsilon(x, t)=X_\tau(x,t)$ for all $\tau\geq t\geq 0$ and $x\in \R$.
    If $I$ is an interval with endpoints $r_1<r_2$, then
    \begin{align*}
        \lim_{\varepsilon\downarrow 0}\sigma_{\tau, t}^\varepsilon(I) = \lim_{\varepsilon\downarrow0} \Big(X_\tau^\varepsilon(r_2, t) - X_\tau^\varepsilon(r_1, t)\Big)=X_\tau(r_2, t) - X_\tau(r_1, t)=\sigma_{\tau, t}^0(I).
    \end{align*}
\end{proof}

\begin{proof}(Proof of Theorem \ref{thm:representation})
    Note first that the last line of the theorem is an immediate consequence of \eqref{eq:representation_formula} and the flow property $X_t(x,0) = X_t(X_s(x,0),s)$. Thus, we restrict our attention to $s=0$.

    Let $a^\eps$, $X^\eps$ and $\sigma^\eps$ be as defined before Lemma \ref{lemma:conv_of_measure}. For $\psi \in C^1_c(\R\times\R_+)$ we define
    \begin{equation} \label{eq:transport_solution}
        \varphi^\varepsilon(x, t) \coloneqq \int_t^\infty \psi(X^\varepsilon_\tau(x, t), \tau)\, d\tau.
    \end{equation}
    Let $R,T>0$ be such that $\supp\psi\subset[-R,R]\times[0,T]$. By enlarging $R$ if necessary, we also have $\supp\phi^\eps\subset [-R,R]\times[0,T]$ for all $\eps>0$. Moreover, $\varphi^\varepsilon$ satisfies
    \begin{equation} \label{eq:composition}
        \varphi^\varepsilon(X_t^\varepsilon(x), t) = \int_t^\infty \psi(X^\varepsilon_\tau(x), \tau)\, d\tau
    \end{equation}
    due to the flow property $X_\tau^\varepsilon(X_t^\varepsilon(x,0), t) = X_\tau^\varepsilon(x,0)$. The regularity of the flow $X^\varepsilon_\tau(x, t)$ (see the discussion following Remark \ref{rem:cont_with_source}) implies that $\varphi^\varepsilon$ has a derivative $\partial_x \varphi^\varepsilon \in C(\R \times \R_+)$ and a weak derivative $\partial_t \varphi^\varepsilon \in L_\loc^1(\R \times \R_+)$ with $\partial_t \varphi^\varepsilon(\cdot, t) \in C(\R)$ for a.e.~$t \in \R_+$. Differentiating both sides of \eqref{eq:composition} in $t$ yields
    \begin{equation*}
        (\partial_t \varphi^\varepsilon + a^\eps \partial_x \varphi^\varepsilon)(X^\varepsilon_t(x), t) = -\psi(X^\varepsilon_t(x), t)
    \end{equation*}
   for a.e.~$t \in \R_+$ and all $x \in \R$ (by continuity in $x$). Hence, since the map $x\mapsto X_t^\eps(x)$ is a diffeomorphism for all $t\geq0$, we conclude that
    \begin{equation} \label{eq:transport_equation}
        \partial_t \varphi^\varepsilon + a^\eps \partial_x \varphi^\varepsilon = - \psi
    \end{equation}
    a.e.~on $\R \times \R_+$. As $\varphi^\varepsilon$ can be used as a test function in \eqref{eq:continuity_eq_weak}, we have
    \begin{equation*} 
        \int_0^T \int_\R \psi v\, dx \,dt = \int_0^T \int_\R \underbrace{(a  - a^\varepsilon)\partial_x \varphi^\varepsilon v}_{\eqqcolon\,\mathcal{E}^\eps(x,t)}\, dx \,dt + \int_\R \varphi^\varepsilon(x, 0) v_0(x)\, dx.
    \end{equation*}
    We claim that $\lim_{\eps\downarrow 0}\|\mathcal{E}^\eps\|_{L^1(\R\times[0,T])}=0$, which would give
    \begin{equation*}
    \begin{split}
        \int_0^T \! \int_\R \psi v\, dx \,dt &= \lim_{\eps\downarrow0} \int_\R \varphi^\varepsilon(x, 0) v_0(x)\, dx
        =\lim_{\varepsilon\downarrow 0} \int_0^T \! \int_\R \psi(X^\varepsilon_t(x), t) v_0(x)\, dx \,dt \\
        &= \int_0^T\! \int_\R \psi(X_t(x), t) v_0(x)\, dx \,dt,
    \end{split}
    \end{equation*}
    and the formula \eqref{eq:representation_formula} would follow by approximation of $\psi\in C_c$ by $\psi \in C^1_c$.

    To prove the claim, note first that for any $0\leq t\leq \tau\leq T$ and $\eps>0$ we have
\begin{align}\label{eq: uniformBoundOnSigma}
    \sigma_{\tau,t}^{\eps}([-R,R]) = X_{\tau}^\eps(R,t)-X_{\tau}^\eps(-R,t) \leq 2(R + \|a\|_{L^1([0,T];L^\infty(\R))})\eqqcolon C_0,
\end{align}
which further gives
\begin{align*}
    \int_\R |\partial_x \varphi^\varepsilon(x,t) |\,dx\leq&\, \int_t^T\int_{-R}^{R}  \bigl|(\partial_x\psi)(X_\tau(x,t),\tau)\bigr|\,d\sigma_{\tau,t}^{\eps}(x)\,d\tau
    \leq  2T\|\partial_x\psi\|_{L^\infty}C_0.
\end{align*}
Thus, setting $C_1\coloneqq 2T\|\partial_x\psi\|_{L^\infty}C_0 \|v\|_{L^\infty}$ we get the bound
\begin{equation}\label{eq:commutator_x}
    \begin{split}
    \int_\R|\mathcal{E}^\eps(x,t)|\,dx\leq C_1\|a(\cdot, t)-a^\eps(\cdot, t)\|_{L^\infty} \leq 2C_1\|a(\cdot,t)\|_{L^\infty},
    \end{split}
\end{equation}
for a.e.~$t\in[0,T]$ and all $\eps>0$. By dominated convergence, the claim then follows if we can show that $\lim_{\eps\downarrow 0}\int_{\R}|\mathcal{E}^\eps(x,t)|\,dx=0$ for a.e.~$t\in[0,T]$.

 Fix any $t \in [0,T]$ for which $a(\cdot, t)$ has one-sided limits and is bounded. For $\kappa>0$ let $J^{\kappa}$ be the closed (and countable) set of points where $a(\cdot, t)$ has a jump discontinuity of size at least $\kappa$, that is,
    \begin{equation*}
        J^{\kappa} = \bigl\{x \in \R : \abs{a(x-, t) - a(x+, t)} \geq \kappa\bigr\}.
    \end{equation*}
    For $\delta > 0$ define the open set $J^\kappa_\delta=\{x\in\R: \mathrm{dist}(x,J^\kappa)<\delta\}$ and note that
    \begin{equation*}
        \limsup_{\varepsilon \downarrow 0}\|a(\cdot, t) - a^\varepsilon(\cdot, t)\|_{L^\infty(K \setminus J^{\kappa}_\delta)}  \leq \kappa
    \end{equation*}
    for any compact set $K\subset\R$. In particular, with $C_1$ as in \eqref{eq:commutator_x} we find that
    \begin{equation*}
        \begin{aligned}
            \limsup_{\varepsilon \downarrow 0} \int_{\R\setminus J^{\kappa}_\delta} |\mathcal{E}^\eps(x,t)| \, dx & \leq C_1  \limsup_{\varepsilon\downarrow 0}\|a(\cdot, t) - a^\varepsilon(\cdot, t)\|_{L^\infty([-R,R] \setminus J^{\kappa}_\delta)} \leq C_1 \kappa.
        \end{aligned}
    \end{equation*}
    On the other hand, setting $C_2(t)\coloneqq 2\|a(\cdot,t)\|_{L^\infty}\|v\|_{L^\infty}\|\partial_x\psi\|_{L^\infty}$ we have
    \begin{equation} \label{eq:jump_part}
        \begin{aligned}
        \int_{J^{\kappa}_\delta} |\mathcal{E}^\eps(x,t)| \, dx
            & \leq C_2(t)\int_t^T \sigma_{\tau, t}^\varepsilon(J^{\kappa}_\delta \cap [-R,R])\,d\tau.
        \end{aligned}
    \end{equation}
    Since $J^{\kappa}_\delta\cap[-R,R]$ is a finite union of bounded intervals, Lemma~\ref{lemma:conv_of_measure} ensures that
    \begin{equation*}
        \lim_{\varepsilon \downarrow 0} \sigma_{\tau, t}^\varepsilon\big(J^{\kappa}_\delta \cap [-R,R]\big) = \sigma_{\tau, t}^0\big(J^{\kappa}_\delta \cap [-R,R]\big)
    \end{equation*}
    for all $\tau \in [t, T]$. In light of the uniform bound \eqref{eq: uniformBoundOnSigma} and dominated convergence, these limits can be taken inside the integral in \eqref{eq:jump_part}, and so we conclude that
    \begin{equation*}
        \limsup_{\varepsilon \downarrow 0} \int_\R |\mathcal{E}^\eps(x,t)|\, dx \leq C_2(t)\int_t^T  \sigma_{\tau, t}^0(J^{\kappa}_\delta \cap [-R,R])\, d\tau + C_1 \kappa.
    \end{equation*}
    Since this was true for arbitrary $\delta > 0$ we may pass to the limit. Using that
    \begin{equation*}
        \lim_{\delta \downarrow 0} \sigma_{\tau, t}^0(J^{\kappa}_\delta \cap [-R,R]) = \sigma_{\tau, t}^0(J^{\kappa} \cap [-R,R])  = 0,
    \end{equation*}
    which follows as $J^{\kappa}$ is closed and countable while $\sigma_{\tau,t}^0$ has no atoms, we arrive at
    \begin{equation*}
        \limsup_{\varepsilon \downarrow 0} \int_\R |\mathcal{E}^\eps(x,t)|\, dx \leq C_1 \kappa,
    \end{equation*}
    after yet another use of dominated convergence. Finally, as $\kappa > 0$ was arbitrary we conclude that $\lim_{\eps\downarrow 0}\int_\R |\mathcal{E}^\eps(x,t)|\, dx =0$.
\end{proof}

\subsection{Well-posed flows implies entropy condition}
A consequence of Theorem~\ref{thm:representation} is that $u - c$ can be represented as a $X^c$-pushforward of previous values. This relation between the flow and the solution allows us to prove the following.

\begin{proposition} \label{prop:entropy_ineq}
    Let $f\in C^1(\R)$, let $u\in L^\infty(\R\times\R_+)$ be a weak solution of \eqref{eq:cl} with initial data $u_0\in L^\infty(\R)$, and assume that $u(t)\in \BV_{\loc}(\R)$ for a.e.~$t\geq 0$. Let $c\in\R$, and assume that $a_c\coloneqq A(u,c)$ has a unique Filippov flow $X^c$. Let $(\eta_c,q_c)$ be the Kruzkov entropy pair defined by \eqref{eq:kruzkoventropy}. Then $u$ satisfies the entropy inequality
    \begin{equation}\label{eq:kruzkov_entropy_ineq}
    \int_{\R_+}\int_\R \eta_c(u)\partial_t\phi + q_c(u)\partial_x\phi\,dx\,dt + \int_\R \eta_c(u_0(x))\phi(x,0)\,dx \geq 0
\end{equation}
    for all $0\leq \varphi\in C^\infty_c(\R\times\R_+)$.
\end{proposition}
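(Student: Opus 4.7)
The conservation law \eqref{eq:cl} rewrites as a linear continuity equation: since $a_c(u-c)=f(u)-f(c)$ and $f(c)$ is constant in $x$, the function $v:=u-c$ is an $L^\infty$ weak solution of $\partial_t v+\partial_x(a_c v)=0$ with initial datum $v_0:=u_0-c$. Because $a_c$ generates a unique Filippov flow $X^c$ by assumption, and $a_c(\cdot,t)$ admits one-sided limits for a.e.~$t$ (inherited from $u(\cdot,t)\in\BV_\loc$), Theorem~\ref{thm:representation} yields $v(\cdot,t)=(X^c_t)_\# v_0$ as Radon measures.

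I then introduce the auxiliary nonnegative Radon measure $\bar v(\cdot,t):=(X^c_t)_\#|v_0|$. The standard total-variation inequality for signed pushforwards gives $|v(\cdot,t)|\,dx\leq\bar v(\cdot,t)$ as measures on $\R$. Moreover, repeating the calculation from the proof of Theorem~\ref{thm:representation} with $|v_0|$ in place of $v_0$ (and using Lemma~\ref{lemma:convergence_universal_representative} to identify $\dot X^c_t(x)=a_c(X^c_t(x),t)$ in the $L^1_\loc$ sense) verifies that $\bar v$ is itself a (measure-valued) distributional solution of $\partial_t\bar v+\partial_x(a_c\bar v)=0$ with initial datum $|v_0|$.

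By the identity \eqref{eq: algebraicIdentitySatisfiedByTheKruzkovEntropies}, the entropy inequality LHS for nonneg $\phi\in C_c^\infty$ equals $L(\phi):=\int_{\R_+}\int_\R|v|(\partial_t\phi+a_c\partial_x\phi)\,dx\,dt+\int_\R|v_0|\phi(0)\,dx$, and the goal is $L(\phi)\geq 0$. For a fixed nonneg $\psi\in C_c^\infty(\R\times(0,\infty))$, I build the backward-transport test function $\phi^\varepsilon(x,t):=\int_t^\infty\psi(X^{c,\varepsilon}_\tau(x,t),\tau)\,d\tau$ from the smooth flow $X^{c,\varepsilon}$ of the mollified velocity $a_c^\varepsilon$. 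Then $\phi^\varepsilon\geq 0$ is Lipschitz with $\partial_t\phi^\varepsilon+a_c^\varepsilon\partial_x\phi^\varepsilon=-\psi$ a.e., and inserting it into $L$ (after splitting $a_c=a_c^\varepsilon+(a_c-a_c^\varepsilon)$) gives
\[L(\phi^\varepsilon)=-\int_{\R_+}\!\!\int_\R|v|\psi\,dx\,dt+\int_{\R_+}\!\!\int_\R|v|(a_c-a_c^\varepsilon)\partial_x\phi^\varepsilon\,dx\,dt+\int_\R|v_0|\phi^\varepsilon(0)\,dx.\]
As $\varepsilon\downarrow 0$, the commutator term vanishes by the same estimate used in the proof of Theorem~\ref{thm:representation} (which uses only the $L^\infty$ bound of the weight, hence applies equally to $|v|$), and the boundary term converges to $\int_{\R_+}\!\int_\R\psi\,d\bar v\,dt$. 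We conclude $\lim_{\varepsilon\downarrow 0}L(\phi^\varepsilon)=\int_{\R_+}\!\int_\R\psi\,d(\bar v-|v|\,dx)\,dt\geq 0$.

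The main obstacle is extending $L(\phi)\geq 0$ from the special subclass $\phi^0:=\int_t^\infty\psi(X^c_\tau(x,t),\tau)\,d\tau$ (nonneg $\psi$) to arbitrary nonneg $\phi\in C_c^\infty(\R\times\R_+)$. Such $\phi^0$ are nonneg but are additionally nonincreasing along particle paths (since $\partial_t\phi^0+a_c\partial_x\phi^0=-\psi\leq 0$), which is a genuine restriction. Equivalently, the desired inequality for all nonneg $\phi$ amounts to the nonnegative measure $\mu:=\bar v-|v|\,dx$ being a distributional super-solution, $\partial_t\mu+\partial_x(a_c\mu)\geq 0$. Intuitively, $\mu$ captures mass accumulated at particle--antiparticle collisions (coming exactly from the singular part of $(X^c_t)_\#v_0^+=(X^c_t)_\#v_0^-$), and its super-solution property encodes the entropy dissipation. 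I expect this final step to be settled either by a density argument for the class of $\phi^0$'s, or by a direct analysis of how mass accumulates at merging points of the Filippov flow.
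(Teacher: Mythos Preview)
Your proposal has a genuine gap, and you have correctly identified it: the class of test functions $\phi^0=\int_t^\infty\psi(X^c_\tau(x,t),\tau)\,d\tau$ obtained as backward-transport solutions is \emph{strictly} smaller than all nonnegative $\phi\in C_c^\infty$, since such $\phi^0$ are automatically nonincreasing along particle paths. Neither of your suggested fixes is straightforward: the density argument fails outright (a general nonnegative $\phi$ cannot be approximated by functions monotone along the flow), and the ``direct analysis of merging points'' would amount to proving the super-solution property $\partial_t\mu+\partial_x(a_c\mu)\geq0$ for $\mu=\bar v-|v|\,dx$, which is exactly equivalent to the entropy inequality you are trying to establish.

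The missing idea, which the paper's proof supplies, is to replace your \emph{global} comparison $|v(\cdot,t)|\leq (X^c_t)_\#|v_0|$ by the \emph{incremental} comparison
\[
|v(\cdot,t)|\leq X^c_t(\cdot,t-\varepsilon)_\#\,|v(\cdot,t-\varepsilon)|,
\]
which follows from the semigroup form of Theorem~\ref{thm:representation} (pushforward from time $s=t-\varepsilon$ rather than $s=0$) together with the same total-variation inequality you used. Concretely, the paper writes $\int\phi\,\eta_c(u)=\sup_{|\theta|\leq1}\int\phi\theta\,(u-c)$, applies the representation formula at $s=t-\varepsilon$ inside the supremum, and bounds the result by $\int\phi(X^c_{t+\varepsilon}(x,t),t+\varepsilon)\,\eta_c(u(x,t))\,dx$ after a shift of the time variable. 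Subtracting, dividing by $\varepsilon$, and invoking Lemma~\ref{lemma:convergence_universal_representative} to pass to the limit gives directly
\[
0\leq\int_{\R_+}\!\int_\R(\partial_t\phi+a_c\partial_x\phi)\,\eta_c(u)\,dx\,dt+\int_\R\phi(x,0)\,\eta_c(u_0)\,dx,
\]
for \emph{every} nonnegative $\phi$. The point is that working infinitesimally in time converts the pushforward inequality into the desired differential inequality without any restriction on the test function; your approach, by pushing all the way from $t=0$, yields only an integrated inequality that is too weak to conclude.
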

\begin{proof}
    The function ${a_c(\cdot, t)\coloneqq A(u(\cdot,t),c)}$ has one-sided limits for a.e.~$t \in \R_+$ since $u \mapsto A(u, c)$ is continuous and $u(\cdot, t) \in \BV_\loc(\R)$. Therefore, by Theorem~\ref{thm:representation}, the function $u - c$ can be written as
    \begin{equation*}
        u(\cdot, t) - c = X_t^c(\cdot,s)_\# (u(\cdot,s) - c)
    \end{equation*}
    for any $t\geq s\geq 0$, where $X^c$ is the flow generated by $a_c$. For $\varepsilon>0$ and $0\leq \phi \in C^\infty_c(\R\times \R_+)$ we therefore get
    \begin{equation*}
        \begin{aligned}
           & \int_0^\infty\!\int_\R \phi(x, t) \eta_c\big(u(x, t)\big)\, dx\, dt
            =\sup_{\substack{\theta\in C_c(\R\times \R_+) \\ \abs{\theta} \leq 1}} \int_0^\infty\!\int_\R \phi(x, t) \theta(x,t) (u(x, t) - c)\, dx\, dt\\*
        & = \sup_{\substack{\theta\in C_c(\R\times \R_+) \\ \abs{\theta} \leq 1}} \Bigg[\begin{aligned}[t] \int_\varepsilon^{\infty}\!\int_\R \phi\big(X^c_{t}(x, t-\varepsilon), t\big) \theta\big(X^c_{t}(x, t-\varepsilon),t\big) (u(x, t-\varepsilon) - c)\, dx\, dt&\\*
         + \int_0^\varepsilon\! \int_{\R}\phi\big(X^c_{t}(x), t\big) \theta\big(X^c_{t}(x),t\big) (u_0(x) - c)\,dx\, dt&\Bigg]\end{aligned}\\*
        & \leq \int_\eps^\infty\!\int_\R \phi\big(X^c_{t}(x, t-\eps), t\big) \eta_c(u(x, t-\eps))\, dx\, dt + \int_0^\varepsilon\!\int_\R \phi\big(X^c_{t}(x), t\big) \eta_c\big(u_0(x)\big)\, dx\, dt \\*
        & = \int_0^\infty\!\int_\R \phi\big(X^c_{t+\varepsilon}(x, t), t+\varepsilon\big) \eta_c\big(u(x, t)\big)\, dx\, dt + \int_0^\varepsilon\!\int_\R \phi\big(X^c_{t}(x), t\big) \eta_c\big(u_0(x)\big)\, dx\, dt.
        \end{aligned}
    \end{equation*}
    Rewriting and dividing by $\varepsilon$ we obtain
    \begin{equation} \label{eq:entropy_difference_ineq}
        \begin{aligned}
            0 & \leq  \int_0^\infty \int_\R \bigg(\frac{\phi\bigl(X^c_{t+\varepsilon}(x, t), t+\varepsilon\bigr) - \phi(x, t)}{\varepsilon} \bigg)\eta_c\big(u(x, t)\big)\, dx \,dt \\
            & \quad + \frac{1}{\eps}\int_0^\varepsilon\int_\R \phi\big(X^c_{t}(x), t\big) \eta_c\big(u_0(x)\big)\, dx\, dt.
        \end{aligned}
    \end{equation}
    The bracket in the first integrand can be written as
    \begin{equation*}
        \int_0^1\partial_t\phi(x, t+r\varepsilon)+ \partial_x\phi\big(x+r(X_{t+\varepsilon}^c(x,t)-x), t+\varepsilon\big) \bigg(\frac{X^c_{t+\varepsilon}(x, t) - x}{\varepsilon}\bigg) \, dr,
    \end{equation*}
   which, as $\eps\downarrow 0$, tends to $\partial_t \phi + a_c\partial_x \phi$ in $L^1(\R\times\R_+)$ due to Lemma  \ref{lemma:convergence_universal_representative} and $\phi$ being $C^\infty_c(\R\times\R_+)$. Thus, letting $\eps\downarrow 0$ in \eqref{eq:entropy_difference_ineq} we conclude that
    \begin{equation*}
        \begin{aligned}
            0 &\leq \int_0^\infty\! \int_\R \Big(\partial_t \phi(x, t) + a_c(x, t)\partial_x \phi(x, t) \Big) \eta_c\big(u(x, t)\big)\, dx \,dt  + \int_\R \phi(x, 0) \eta_c\big(u_0(x)\big)\, dx,
        \end{aligned}
    \end{equation*}
    which, due to the identity \eqref{eq: algebraicIdentitySatisfiedByTheKruzkovEntropies}, is exactly \eqref{eq:kruzkov_entropy_ineq}.
\end{proof}

\begin{remark}
As mentioned in Remark \ref{rem:main_thm_generalization}, the above can be generalized to merely $f\in \mathrm{Lip}(\R)$ for those $c\in\R$ where $f'(c)$ exists.
\end{remark}

\begin{proof}(Proof of (ii) $\Rightarrow$ (i) in Theorem \ref{thm:main})
    If the Filippov flow $X^c$ for $a_c$ is unique for all $c \in \R$ it follows by Proposition \ref{prop:entropy_ineq} that the Kruzkhov entropy inequality holds for all $c$. Consequently, the weak solution $u$ is the entropy solution.
\end{proof}

We conclude this section with an example where we use uniqueness of Filippov flow as a selection principle.

\begin{example}
    Consider the problem
    \begin{equation} \label{eq:selcetion_prob}
        \partial_t u + \partial_x \biggl(\frac{1}{2}u^2\biggr) = 0, \qquad u_0(x) =
        \begin{cases}
            3 & \text{for } 0 < x < 1 \\
            1 & \text{else}
        \end{cases}
    \end{equation}
    for $t\in [0,1]$, and the two weak solutions
    \begin{equation*}
        u(x, t) =
        \begin{cases}
            \frac{x}{t} &  \text{if } t < x < 3t \\
            3 & \text{if } 3t < x < 1 + 2t \\
            1 & \text{else}
        \end{cases}
        \qquad \text{and} \qquad \tilde{u}(x, t) = u_0(x - 2t),
    \end{equation*}
    that is, the entropy solution and the traveling wave solution. For $c = 0$ one can check that both solutions have unique particle paths solving \eqref{eq:flow} with a right-hand side given by $\frac{1}{2}u$ and $\frac{1}{2}\tilde{u}$. However, with $c = 2$ there are infinitely many solutions for $\frac{1}{2}(\tilde{u} + 2)$ starting at $x = 0$: A particle may follow the nonentropic shock and, at any time $t \geq 0$, branch off to either side (rightmost picture in Fig.~\ref{fig:selection_non-entropic}). On the other hand, the right-hand side $\frac{1}{2}(u + 2)$ still generates a unique flow of particle paths (Fig.~\ref{fig:selection_entropic}). In other words, the paths are well-defined except when $c$ intersects the nonentropic shock.

    \begin{figure}
        \centering
        \begin{subfigure}[t]{\textwidth}
            \centering
            \includegraphics[width=0.32\textwidth]{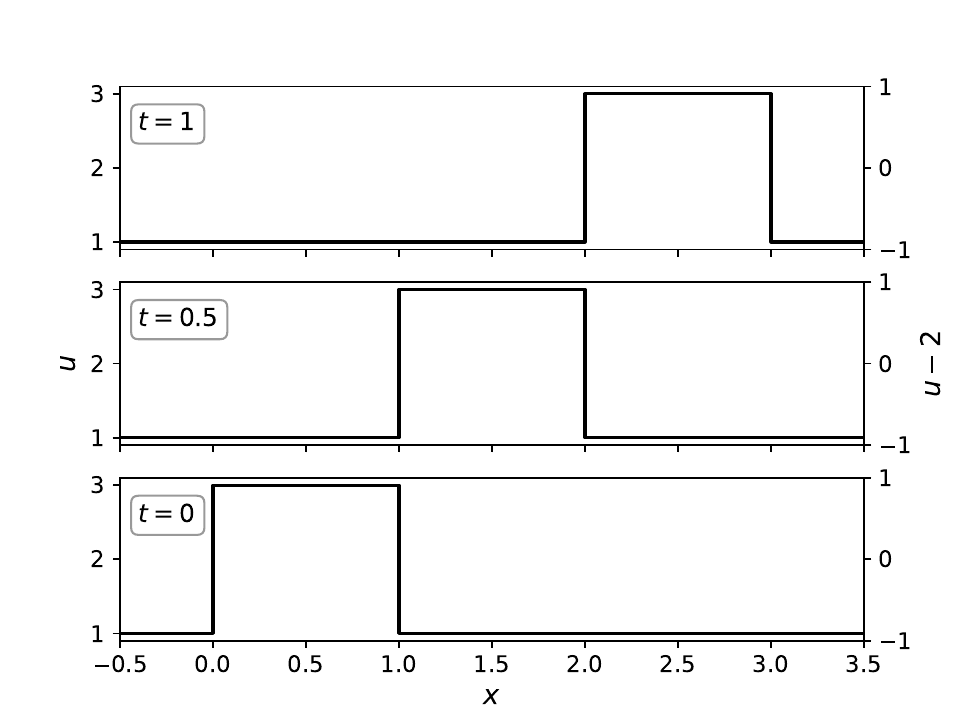}
            \includegraphics[width=0.32\textwidth]{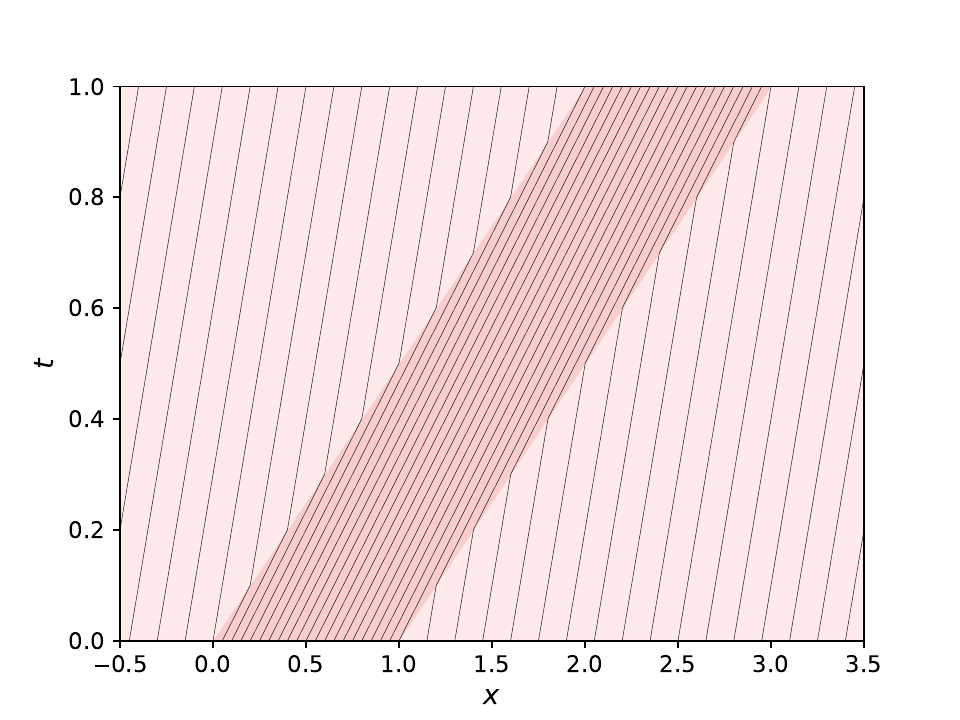}
            \includegraphics[width=0.32\textwidth]{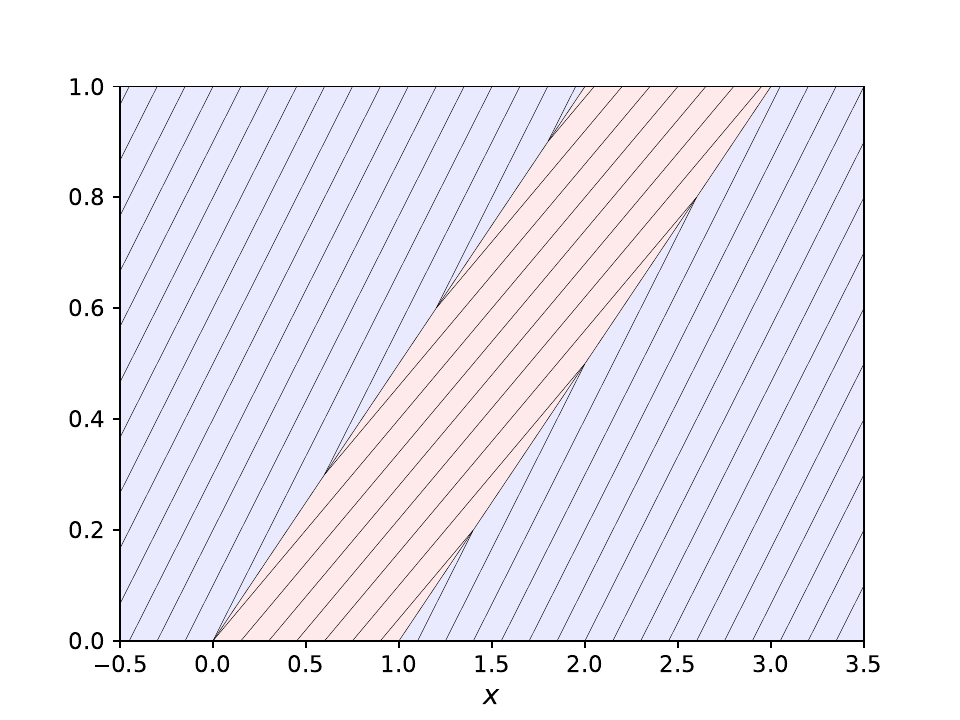}
            \caption{Nonentropic solution $\tilde{u}$.\label{fig:selection_non-entropic}}
        \end{subfigure}

        \begin{subfigure}[t]{\textwidth}
            \centering
            \includegraphics[width=0.32\textwidth]{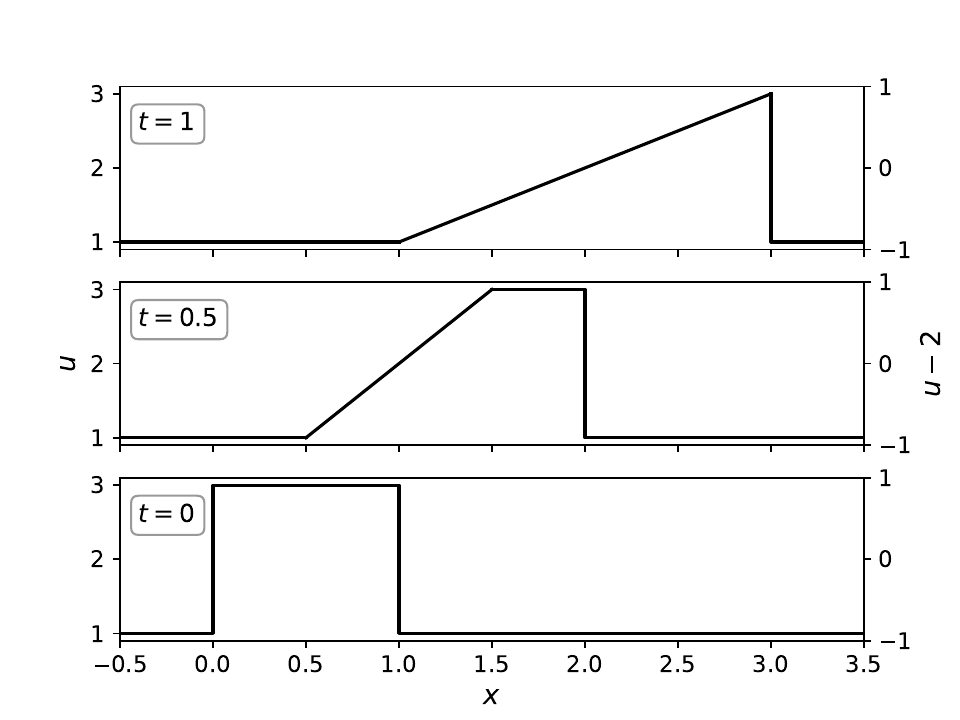}
            \includegraphics[width=0.32\textwidth]{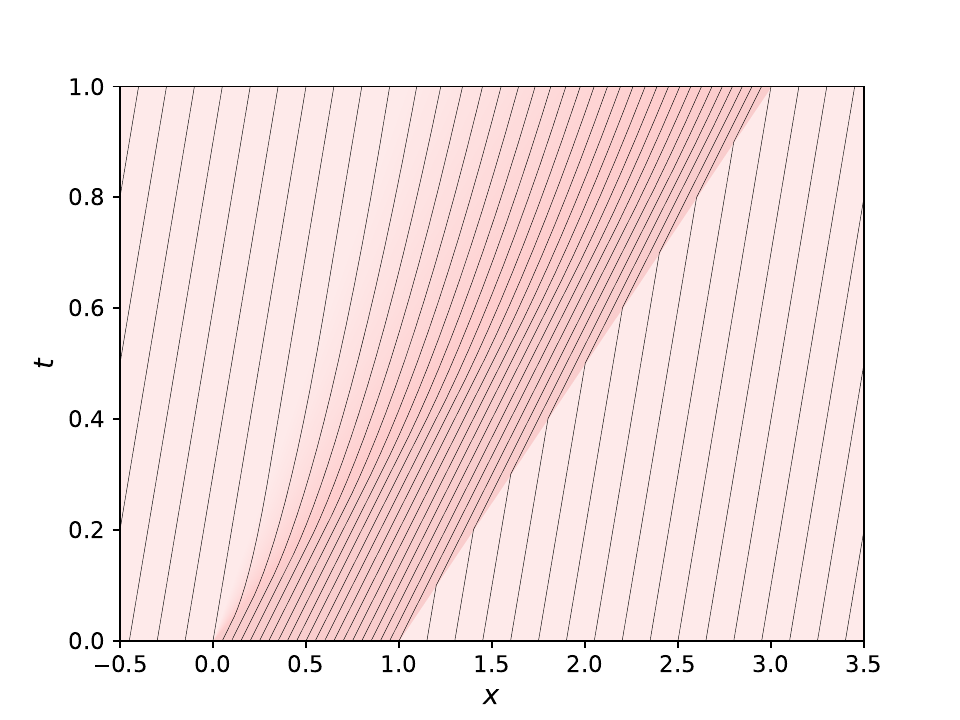}
            \includegraphics[width=0.32\textwidth]{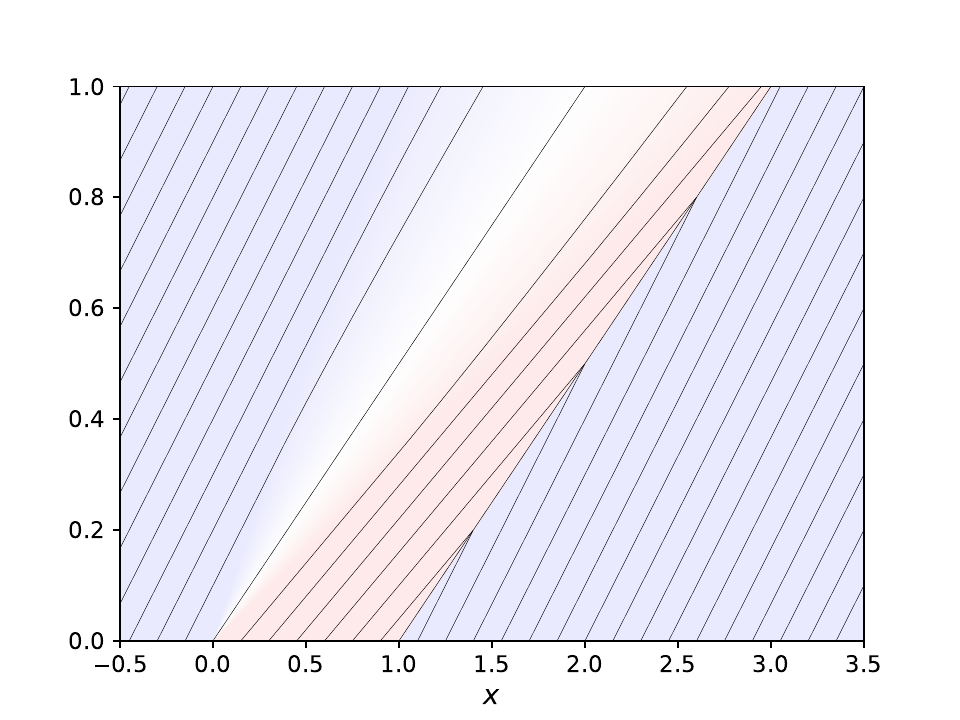}
            \caption{Entropy solution ${u}$.}\label{fig:selection_entropic}
        \end{subfigure}
        \caption{Nonentropic solution $\tilde u$ (top row) and entropy solution $u$ (bottom row). Solution at $t=0, 0.5 ,1$ (left), particle paths for $a_c$ with $c=0$ (middle), and with $c=2$ (right). The figures are colored according to the value of $u-c$, with red representing positive and blue negative values. The initial density of the particles has been chosen to be proportional to $|u_0-c|$.}
        \label{fig:selection}
    \end{figure}

\end{example}

\section{Conclusions and future work}
We have shown that there is a close connection between the entropy selection principle for nonlinear conservation laws \eqref{eq:cl}, and the well-posedness of associated linear continuity equations \eqref{eq:conteq}. The continuity equation arises as a linearization of the nonlinear conservation law; thus, one way of interpreting our result is that the entropy selection principle is equivalent to \emph{linear stability}, in the above sense. The proof of sufficiency, given in Section \ref{sec:uniquenessOfFlow}, goes via a novel and detailed analysis of particle paths (solutions of the associated ODE \eqref{eq:flow}). As a byproduct of this analysis, we derive a sharp regularity estimate on the flow of the ODE. The proof of necessity, given in Section \ref{sec:selectionPrinciple}, first establishes a representation formula for the continuity equation by a renormalization technique, and then uses this to estimate the entropy production of the weak solution.

As seen in Example \ref{ex:multiple_dimensions}, our result is not valid in multiple space dimensions. Although it should be possible---at least in principle---to generalize our proof of necessity to multiple dimensions, our proof of sufficiency is fundamentally one-dimensional. Any generalization to multiple dimensions would therefore require an entirely different approach. Generalizations to spatially dependent fluxes or nonlocal fluxes would be equally interesting.

Any new stability property of a PDE opens up the possibility of novel proofs of convergence of various types of approximations. We aim to utilize the ideas put forth in this paper to prove convergence of numerical and stochastic approximations of \eqref{eq:cl} in forthcoming works.

The stability of particle paths with respect to perturbations in the initial data $u_0$ and the flux $f$ is also a natural topic to explore. Results in this direction will be presented in an upcoming paper.

A corollary of our Main Theorem and the representation formula in Theorem~\ref{thm:representation} is that for any $v_0\in L^\infty(\R)$, there exists at most one bounded weak solution of the ``linearized'' continuity equation \eqref{eq:conteq} with velocity $a=a_c$. However, we have been unable to prove the converse---that uniqueness of the continuity equation implies that $u$ is the entropy solution of \eqref{eq:cl}.  Results in this direction would be interesting, since they would broaden our understanding of the connection between continuity equations \eqref{eq:conteq} and the associated ODE \eqref{eq:flow}. (See e.g.~Ref.~\cite{ambrosio_crippa_2008} for results in this direction, but with \emph{nonnegative} solutions.)

\section*{Acknowledgements}
U.~S.~Fjordholm and O.~H.~Mæhlen were partially supported by the Research Council of Norway project \textit{INICE}, project no.~301538.

\bibliography{bibliography}

\begin{thebibliography}{10}

\bibitem{ambrosio_2004}
Luigi Ambrosio.
\newblock Transport equation and {Cauchy} problem for {BV} vector fields.
\newblock {\em Inventiones mathematicae}, 158(2):227--260, November 2004.

\bibitem{ambrosio_crippa_2008}
Luigi Ambrosio and Gianluca Crippa.
\newblock Existence, {Uniqueness}, {Stability} and {Differentiability}
  {Properties} of the {Flow} {Associated} to {Weakly} {Differentiable} {Vector}
  {Fields}.
\newblock In Fabio Ancona, Stefano Bianchini, Rinaldo~M. Colombo, Camillo
  de~Lellis, Andrea Marson, and Annamaria Montanari, editors, {\em Transport
  {Equations} and {Multi}-{D} {Hyperbolic} {Conservation} {Laws}}, volume~5,
  pages 3--57. Springer Berlin Heidelberg, Berlin, Heidelberg, 2008.
\newblock Series Title: Lecture Notes of the Unione Matematica Italiana.

\bibitem{ambrosio_fusco_pallara_2000}
Luigi Ambrosio, Nicola Fusco, and Diego Pallara.
\newblock {\em Functions of bounded variation and free discontinuity problems}.
\newblock Oxford mathematical monographs. Clarendon Press, Oxford; New York,
  2000.

\bibitem{ambrosio_gigli_savare_2005}
Luigi Ambrosio, Nicola Gigli, and Giuseppe Savaré.
\newblock {\em Gradient flows: in metric spaces and in the space of probability
  measures}.
\newblock Lectures in mathematics {ETH} {Zürich}. Birkhäuser, 2005.

\bibitem{bianchini_bonicatto_marconi_2021}
S.~Bianchini, P.~Bonicatto, and E.~Marconi.
\newblock Lagrangian {Representations} for {Linear} and {Nonlinear}
  {Transport}.
\newblock {\em Journal of Mathematical Sciences}, 253(5):642--659, March 2021.

\bibitem{Colombo_Marson_2003}
R.~M. Colombo and A.~Marson.
\newblock {A Hölder continuous ODE related to traffic flow}.
\newblock {\em Proceedings of the Royal Society of Edinburgh: Section A
  Mathematics}, 133(4):759–772, 2003.

\bibitem{conway_1967}
Edward~D Conway.
\newblock Generalized solutions of linear differential equations with
  discontinuous coefficients and the uniqueness question for multidimensional
  quasilinear conservation laws.
\newblock {\em Journal of Mathematical Analysis and Applications},
  18(2):238--251, May 1967.

\bibitem{dafermos_1977}
C.~M. Dafermos.
\newblock Generalized characteristics and the structure of solutions of
  hyperbolic conservation laws.
\newblock {\em Indiana University Mathematics Journal}, 26(6):1097--1119, 1977.

\bibitem{Dafermos}
C.~M. Dafermos.
\newblock {\em {Hyperbolic Conservation Laws in Continuum Physics}}, volume 325
  of {\em {Grundlehren der mathematischen Wissenschaften}}.
\newblock Springer-Verlag Berlin Heidelberg, 4 edition, 2016.

\bibitem{diperna_lions_1989}
R.~J. DiPerna and P.~L. Lions.
\newblock Ordinary differential equations, transport theory and {Sobolev}
  spaces.
\newblock {\em Inventiones Mathematicae}, 98(3):511--547, October 1989.

\bibitem{Filippov60}
A.~F. Filippov.
\newblock {Differential equations with discontinuous right-hand side}.
\newblock {\em Matem. Sbornik}, 5:99--127, 1960.
\newblock English trans. in \textit{Amer. Math. Translations}, vol. 42, pp.
  199--231, 1964.

\bibitem{filippov_1988}
A.~F. Filippov.
\newblock {\em Differential {Equations} with {Discontinuous} {Righthand}
  {Sides}}, volume~18 of {\em Mathematics and {Its} {Applications}}.
\newblock Springer Netherlands, Dordrecht, 1988.

\bibitem{hoff_1983}
David Hoff.
\newblock The sharp form of {O}leinik's entropy condition in several space
  variables.
\newblock {\em Trans. Amer. Math. Soc.}, 276(2):8, 1983.

\bibitem{HoldenRisebro}
H.~Holden and N.~H. Risebro.
\newblock {\em {Front Tracking for Hyperbolic Conservation Laws}}.
\newblock Springer-Verlag Berlin Heidelberg, second edition, 2015.

\bibitem{hormander_1997}
Lars Hörmander.
\newblock {\em Lectures on nonlinear hyperbolic differential equations}.
\newblock Number~26 in Mathématiques \& applications. Springer, Berlin
  Heidelberg, 1997.

\bibitem{Kru70}
S.~N. Kruzkov.
\newblock {First order quasilinear equations in several independent variables}.
\newblock {\em Math USSR SB}, 10(2):217–243, 1970.

\bibitem{Oleinik57}
O.~A. Oleinik.
\newblock {Discontinuous solutions of non-linear differential equations}.
\newblock {\em Uspekhi Mat. Nauk.}, 12:3–73, 1957.
\newblock English translation, Amer. Math. Soc. Trans., ser. 2, no.26, pp.
  95-192.

\bibitem{Oleinik59}
O.~A. Oleinik.
\newblock Uniqueness and stability of the generalized solution of the cauchy
  problem for a quasi-linear equation.
\newblock {\em Uspekhi Mat. Nauk}, 14(2):165--170, 1959.
\newblock English translation, Amer. Math. Soc. Trans., ser. 2, no. 33, pp.
  285--290.

\bibitem{PoupaudRascle97}
F.~Poupaud and M.~Rascle.
\newblock {Measure solutions to the linear multi-dimensional transport equation
  with non-smooth coefficients}.
\newblock {\em Communications in Partial Differential Equations},
  22(1-2):225--267, 1997.

\bibitem{ConwaySmoller67}
Joel~A Smoller and Edward Conway.
\newblock Uniqueness and stability theorem for the generalized solution of the
  initial-value problem for a class of quasi-linear equations in several space
  variables.
\newblock {\em Arch. Ration. Mech. An.}, 23:399–408, 1967.

\bibitem{TadmorRascleBagnerini2005}
E.~Tadmor, M.~Rascle, and Patrizia Bagnerini.
\newblock Compensated compactness for 2d conservation laws.
\newblock {\em Journal of Hyperbolic Differential Equations}, 2(3):697–712,
  2005.

\bibitem{Tadmor1991Error}
Eitan Tadmor.
\newblock Local error estimates for discontinuous solutions of nonlinear
  hyperbolic equations.
\newblock {\em SIAM journal on numerical analysis}, 28(4):891--906, 1991.

\end{thebibliography}
\bibliographystyle{plain}
\end{document}